\documentclass[a4paper]{amsart}
\usepackage{graphicx}
\usepackage{amssymb}
\usepackage{amsmath}
\usepackage{amsthm}
\usepackage[all]{xy}
\usepackage{ragged2e}
\usepackage{makecell}
\usepackage{enumitem}

\newtheorem*{thmmA}{\bf Theorem A}
\newtheorem*{thmmB}{\bf Theorem B}
\newtheorem{thm}{Theorem}[section]

\newtheorem{lem}[thm]{Lemma}
\newtheorem{exm}[thm]{Example}
\newtheorem{prop}[thm]{Proposition}
\theoremstyle{definition}
\newtheorem{defn}[thm]{Definition}
\theoremstyle{remark}
\newtheorem{rem}[thm]{\bf Remark}

\newcommand{\xr}{\mathbb{R}}
\newcommand{\xh}{\mathbb{H}}
\newcommand{\xc}{\mathbb{C}}
\newcommand{\dba}{\mathbf{D}^{\mathrm{b}}(A\mbox{-{\rm mod}})}
\begin{document}
\bibliographystyle{plain}
\title{the derived-discrete algebras over the real numbers}

\author{Jie Li}
\subjclass[2010]{16G10, 16G20}

\keywords{derived-discrete algebra, complexification, quiver presentation.}
\thanks{E-mail: lijie0$\symbol{64}$mail.ustc.edu.cn}

\begin{abstract}
We classify derived-discrete algebras over the real numbers up to Morita equivalence, using the classification of complex derived-discrete algebras in [{\sc D. Vossieck}, {\em The algebras with discrete derived category}, J. Algebra {\bf 243} (2001), 168--176]. To this end, we investigate the quiver presentation of the complexified algebra of a real algebra given by a modulated quiver and an admissible ideal.	
\end{abstract}

\maketitle

\section{introduction}

The notion of derived-discrete algebras was introduced by Vossieck in \cite{V}. Their bounded derived categories are virtually of finite representation type. More precisely, for each cohomology dimension vector, there are only finitely many objects in the bounded derived category. This class of algebras is important in the representation theory of finite dimensional algebras; see \cite{BGS, BPP}. 

The main result in \cite{V} states that a connected derived-discrete algebra over an algebraically closed field either is piecewise hereditary of Dynkin type, or admits a quiver presentation which is gentle one-cycle without clock condition. Our motivation is to give a similar description of derived-discrete algebras over a \textit{non-algebraically closed} field. 

One possible approach is to generalize the arguments in \cite{V} to non-algebraically closed field versions. For example, the covering theory of quivers plays an important role in \cite{V}. However, over a non-algebraically closed field, it seems very complicated to develop such a theory for modulated quivers. 

On the other hand, derived-discrete algebras and piecewise hereditary algebras are compatible with finite separable field extensions; see \cite{L}. It suggests that field extensions might be useful for our purpose. Assume that $k$ is a perfect field which is cofinite in its algebraic closure $\overline{k}$. To describe a derived-discrete algebra over $k$, we need to consider its presentation $T(Q,\mathcal{M})/I$, where $Q$ is a quiver, $\mathcal{M}$ a modulation on $Q$ and $I$ an admissible ideal of the tensor algebra $T(Q,\mathcal{M})$; see \cite{B,DR}. To make use of the classification of derived-discrete algebras over $\overline{k}$, we also need to describe the quiver presentation $\overline{k}\Gamma/J$ of $T(Q,\mathcal{M})/I\otimes_k\overline{k}$, the field extension of $T(Q,\mathcal{M})/I$. However, for a general $k$, the modulation $\mathcal{M}$ is complicated, which makes it difficult to describe both $T(Q,\mathcal{M})/I$ and $\overline{k}\Gamma/J$.

In this article, we take $k=\xr$ and $\overline{k}=\xc$, in which case, $\mathcal{M}$ and $\Gamma$ can be explicitly described. Indeed, Gabriel described $\Gamma$ in \cite{G} and used it to classify all the representation-finite hereditary $\xr$-algebras. Since we need to describe the ideals $I$ and $J$, we should go further. We give the related isomorphisms explicitly and do some concrete computation; see Lemma~\ref{sym} for example. Such computation will become too difficult if $k$ is a general field.

Our main theorem is the following.

\begin{thmmA}{\rm (Theorem \ref{main})}
	Let $\xc\Gamma/J$ be a quiver presentation of $T(Q, \mathcal{M})/I\otimes_{\xr}\xc$. Then $T(Q,\mathcal{M})/I$ is gentle one-cycle without clock condition if and only if each connected component of $\xc\Gamma/J$ is gentle one-cycle without clock condition.
\end{thmmA}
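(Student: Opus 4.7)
The plan is to analyze the complexification functor $-\otimes_{\xr}\xc$ on algebras of the form $T(Q,\mathcal{M})/I$ and show that the structural conditions defining ``gentle one-cycle without clock condition'' correspond on the two sides.

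First I would recall, or draw from the earlier sections of the paper, the explicit description of the quiver $\Gamma$ of the complexification. Since $k=\xr$, each vertex algebra appearing in $\mathcal{M}$ is a finite-dimensional division $\xr$-algebra, and for the classes relevant here only $\xr$ and $\xc$ arise. A vertex with modulation $\xr$ remains a single vertex of $\Gamma$, while a vertex with modulation $\xc$ splits into two vertices since $\xc\otimes_{\xr}\xc\cong\xc\times\xc$; each arrow bimodule is then complexified and decomposed as a $\xc$-$\xc$-bimodule at the new vertex algebras, yielding a prescribed pattern of arrows in $\Gamma$.

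For the forward direction, suppose $T(Q,\mathcal{M})/I$ is gentle one-cycle without clock condition. Since the vertex modulation is uniform, either $\Gamma=Q$ (uniform $\xr$, in which case $\xc\Gamma/J\cong \xc Q/(I\otimes_{\xr}\xc)$ is immediately gentle one-cycle without clock condition) or $\Gamma$ is obtained from $Q$ by doubling each vertex and distributing arrows according to the bimodule type (uniform $\xc$). In the latter case, I would trace how the unique cycle of $Q$ propagates through $\Gamma$: each arrow bimodule on the cycle contributes a specified pair of arrows between split vertices, and a careful count shows the resulting connected components are each gentle one-cycle quivers, with relations lifted from the length-two generators of $I$ yielding the no-clock property component by component.

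For the backward direction, suppose each connected component of $\xc\Gamma/J$ is gentle one-cycle without clock condition. The argument has three parts. First, uniformity of the vertex modulation: if some arrow of $Q$ joined an $\xr$-vertex to a $\xc$-vertex, then after complexification one would see parallel arrows to the two split vertices, which together with the gentleness bound on arrows per vertex and the combinatorics of quadratic relations forces a contradiction. Second, that $I$ is generated by length-two modulated relations: gentleness of $J$ means every admissible presentation is monomial of degree two, and by choosing a presentation of $\xc\Gamma/J$ compatible with the Galois action of $\mathrm{Gal}(\xc/\xr)$ coming from the $\xr$-structure, the generators descend to $\xr$-linear combinations supported in modulations on length-two paths of $Q$. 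Third, that $(Q,I)$ satisfies the gentle one-cycle without clock condition: the at-most-two-arrows and relation-pattern conditions transfer back from components of $\Gamma$ to $Q$ via the doubling correspondence, and the unique cycle together with the no-clock property can be read off any connected component.

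The main obstacle will be the second step of the backward direction: because $J$ is not uniquely determined, one must first realize $\xc\Gamma/J$ via a presentation compatible with the $\xr$-structure on $\xc\Gamma$, and then show its generators genuinely descend to admissible relations in $I$. This Galois-descent argument is the technical heart of the proof and relies crucially on the analysis of modulations on paths developed earlier in the article.
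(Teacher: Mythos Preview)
Your proposal has two genuine gaps.

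First, you restrict to vertex modulations $\xr$ and $\xc$, but the paper works over $\xr$ with all three division algebras $\xr$, $\xc$, $\xh$. The quaternionic case is not a side remark: the construction of $\Gamma$, the bimodule analysis, and several of the key lemmas (e.g.\ the symmetry Lemma~\ref{sym} and Example~\ref{aft-cpl-npth}) explicitly treat $\xh$, and the final classification allows gentle one-cycle algebras of the form $\xh Q/\langle R\rangle$.

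Second, and more seriously, your backward direction rests on the claim that ``gentleness of $J$ means every admissible presentation is monomial of degree two,'' followed by a Galois-descent to pull generators down to $I$. This is not correct as stated: the ideal $J$ is the \emph{specific} ideal $\Psi(\textbf{e}(I\otimes_{\xr}\xc)\textbf{e})$, and even when $\xc\Gamma/J$ is isomorphic to a gentle algebra, $J$ itself need not be monomial. The paper confronts this head-on in Proposition~\ref{special}: after normalizing the induced quiver automorphism to the identity, either $J$ coincides with the monomial gentle ideal, or $J$ has the explicit non-monomial form $\langle \gamma\beta-\lambda\gamma\alpha_{n-1}\cdots\alpha_0\beta,\,p_2,\dots,p_r\rangle$ arising from a subquiver $\Omega$ with an oriented cycle. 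In the non-monomial case one cannot descend generators directly; instead the paper shows (in the proof of the `if' part) that the corresponding $I$ has the analogous shape with $\lambda$ replaced by a real scalar $\mu$, and then exhibits an explicit $\xr$-algebra isomorphism $\theta$ from a genuinely gentle $T(Q,\mathcal{M})/I'$ to $T(Q,\mathcal{M})/I$. Your Galois-descent sketch does not address this phenomenon, and without Proposition~\ref{special} (or an equivalent structural result) the argument does not close. The uniformity step is likewise more delicate than your outline suggests: the paper first shows $Q$ is a tree or one-cycle (Lemma~\ref{tr-or-cy}), rules out the tree case via a parity/symmetry argument on relations in the cycle of $\Gamma$ (Lemmas~\ref{sym} and \ref{tr-or-cy}(2)), and only then deduces v-uniformity via Lemma~\ref{tri mod}.
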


Here, since an $\xr$-algebra $T(Q,\mathcal{M})$ is in general not a path algebra, the definition of \textit{gentle one-cycle without clock condition} for $T(Q, \mathcal{M})/I$ is not the usual one in \cite{V}. It requires that the modulation on the vertices of $Q$ is uniform, which makes $T(Q,\mathcal{M})$ 'similar' to path algebra (see Example \ref{notpathalg} for a nontrivial example). Then the ideal $I$ can be generated by elements in the modulations on paths and we can define 'gentle one-cycle without clock condition'; see Definition~4.2.

For the proof of Theorem A. If the modulation on the vertices is uniform, then $J$ can be described; see Lemma~\ref{rtc}. This helps us to prove the `only if' part of the theorem. For the `if' part, it needs more effort. One problem is that $J$ is not unique. We solve this by investigating when $J$ is monomial; see Proposition~\ref{special}. Another problem is why the modulation is uniform on vertices. We prove this by giving some restrictions on $Q$ and $\mathcal{M}$, which are caused by the combinatorial conditions of $\xc\Gamma/J$; see lemmas in Section~6.

In view of the classification of complex derived-discrete algebras in \cite{V} and \cite[Theorem 4.1]{L}, we can classify real derived-discrete algebras by Theorem A:

\begin{thmmB}{\rm (Theorem 4.6)}
	A connected $\xr$-algebra is derived-discrete if and only if it is either piecewise hereditary of Dynkin type or Morita equivalent to $T(Q,\mathcal{M})/I$ which is gentle one-cycle without clock condition.
\end{thmmB}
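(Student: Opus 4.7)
The plan is to combine Theorem A with the behavior of derived-discreteness and piecewise-hereditariness under the finite Galois extension $\xr \subset \xc$, as recorded in Theorem 4.1 of \cite{L}. That result says that a finite-dimensional $\xr$-algebra $A$ is derived-discrete (respectively, piecewise hereditary of Dynkin type) if and only if $A\otimes_\xr\xc$ is so. Once this is in hand, the theorem reduces to Vossieck's classification on the $\xc$ side, together with Theorem A, which is exactly the descent step for the gentle one-cycle without clock condition case.

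For the \emph{if} direction, I would assume $A$ is a connected $\xr$-algebra of one of the two types. If $A$ is piecewise hereditary of Dynkin type, then $A\otimes_\xr\xc$ is as well, hence is derived-discrete by Vossieck, and therefore $A$ is derived-discrete by \cite[Theorem 4.1]{L}. If instead $A$ is Morita equivalent to $T(Q,\mathcal{M})/I$ that is gentle one-cycle without clock condition, Theorem A says each connected component of the presentation $\xc\Gamma/J$ of $A\otimes_\xr\xc$ is gentle one-cycle without clock condition, so by Vossieck this complexification is derived-discrete, and descending by \cite[Theorem 4.1]{L} gives derived-discreteness of $A$.

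For the \emph{only if} direction, I would assume $A$ is connected and derived-discrete, fix a presentation $A\cong T(Q,\mathcal{M})/I$, and let $\xc\Gamma/J$ present $A\otimes_\xr\xc$. By \cite[Theorem 4.1]{L}, $A\otimes_\xr\xc$ is derived-discrete, so each of its connected components is derived-discrete and hence satisfies Vossieck's dichotomy. Because $A$ is connected over $\xr$, its complexification has at most two blocks, forming a single $\mathrm{Gal}(\xc/\xr)$-orbit; both classes in Vossieck's dichotomy are stable under complex conjugation, so all components are of the same type. If every component is piecewise hereditary of Dynkin type, then $A\otimes_\xr\xc$ is piecewise hereditary of Dynkin type, hence so is $A$ by \cite[Theorem 4.1]{L}. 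Otherwise, every component is gentle one-cycle without clock condition, and Theorem A transfers this property back, yielding that $A$ itself is gentle one-cycle without clock condition.

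The main obstacle I anticipate is the Galois-uniformity step: one needs to check precisely that \cite[Theorem 4.1]{L} covers the piecewise hereditary case over non-algebraically closed fields, and that the two Vossieck classes are mutually exclusive and stable under complex conjugation, so that the components of $A\otimes_\xr\xc$ cannot mix types. Once that is settled, the theorem follows by assembling Theorem A, Vossieck's classification, and \cite[Theorem 4.1]{L}.
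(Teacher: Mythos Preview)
Your proposal is correct and follows the same overall strategy as the paper: reduce to $\xc$ via \cite{L}, apply Vossieck's classification there, and use Theorem~A to descend the gentle one-cycle case. The structure of both directions matches the paper's argument.

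One point of difference worth flagging: you rely on \cite[Theorem~4.1]{L} for the equivalence ``$A$ is piecewise hereditary of Dynkin type $\Leftrightarrow$ $A\otimes_\xr\xc$ is'', and you correctly identify this as the step needing verification. The paper does \emph{not} cite \cite{L} for this; instead it supplies a short self-contained argument: by \cite[Theorem~2.3]{Z}, piecewise hereditary of Dynkin type is equivalent to being derived finite (finitely many indecomposables in $\mathbf{D}^{\mathrm{b}}$ up to shift), and derived finiteness is then shown to ascend and descend along $\xr\subset\xc$ by the usual split/separable direct-summand trick. So your anticipated obstacle is exactly where the paper does a little extra work rather than appealing to \cite{L}.

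Your Galois-uniformity remark---that the components of $A\otimes_\xr\xc$ form a single $\mathrm{Gal}(\xc/\xr)$-orbit and hence cannot mix Vossieck types---is a genuine point that the paper leaves implicit. The paper simply asserts that the claim plus Vossieck plus Theorem~A finish the proof, without spelling out why a mixed situation cannot occur; your observation fills that gap cleanly.
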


The article is organized as follows. In Section~2, we list division rings over $\xr$, simple bimodules over these rings and their complexifications. Given a modulated quiver $(Q,\mathcal{M})$, we construct in Section~3 a quiver presentation $\xc\Gamma/J$ of $T(Q,\mathcal{M})/I\otimes_{\xr}\xc$; see Theorem~\ref{morita eq}. In Section 4, we state Theorem~A and prove Theorem~B; see Theorem~\ref{main} and Theorem~\ref{cls}. The proofs of the `only if' part and `if' part of Theorem~\ref{main} are put in Section~5 and Section~6, respectively.

\section{Real algebras and complexifications}
In this section, we give some well known isomorphisms on $\xr$-algebras and their complexifications. It is hard to see these materials in one literature. 

By Frobenius theorem \cite[Theorem 4.6.1]{DK}, the (isomorphism classes of) finite-dimensional division rings over the real number field are only the field $\mathbb{R}$ itself, the field $\mathbb{C}$ of complex numbers and the quaternion algebra $\mathbb{H}$. Denote the conjugate of $c\in\xc$ by $\overline{c}$. We view $\mathbb{H}$ as $$\{\begin{bmatrix} a & b \\ -\overline{b} & \overline{a} \end{bmatrix}
|a,b\in \mathbb{C} \},$$ an $\mathbb{R}$-subalgebra of $M_2(\mathbb{C})$ (the full matrix algebra of the $2\times2$ matrices over $\mathbb{C}$). In this section, let $a$, $b$, $c$, $d$ be any complex numbers.

The complexifications of the above division rings are listed below.

\subsection{The division ring $\xr$} We have an isomorphism of $\xc$-algebras $$\xr\otimes_{\xr}\xc\simeq\xc \mbox{ given by multiplication.}$$
We denote such $\xc$, the complexification of $\xr$, by $C_{\xr}$.
\subsection{The division ring $\xh$} We have an isomorphism of $\xc$-algebras (see \cite[Theorem 4.5.1]{DK}) $$\xh\otimes_{\xr}\xc\simeq M_2(\xc)\mbox{ given by }\begin{bmatrix} a & b \\ -\overline{b} & \overline{a} \end{bmatrix}\otimes c\mapsto\begin{bmatrix} ac & bc\\-\overline{b}c &\overline{a}c\end{bmatrix}.$$
We denote such algebra $M_2(\xc)$ by $C_{\xh}$. In $C_{\xh}$, $\begin{bmatrix}1&0\\0&0\end{bmatrix}$ is an idempotent.

\subsection{The division ring $\xc$} We have an isomorphism of $\xc$-algebras $$\xc\otimes_{\xr}\xc\simeq\xc\times\xc\mbox{ given by } a\otimes b\mapsto (ab,\overline{a}b).$$ We denote such algebra $\xc\times\xc$ by $C_{\xc}$. In $C_{\xc}$, $(1,0)$ and $(0,1)$ are two idempotents.

\medskip

Now we consider the complexifications of bimodules. Let $D_1$, $D_2$ be two real division rings and $M$ be a $D_1$-$D_2$-bimodule, which is also a $D_1\otimes D_2^{\mathrm{op}}$ left module. Then $M\otimes_{\xr}\xc$ is a $D_1\otimes_{\xr}\xc$-$D_2\otimes_{\xr}\xc$-bimodule. Up to isomorphisms, all the simple bimodules over real simple division rings and their complexifications are listed below.

\subsection{Simple $\xr$-$\xr$-bimodules} Since $\xr\otimes_{\xr}\xr^{\mathrm{op}}\simeq\xr$, $_{\xr}\xr_{\xr}$ is the only simple $\xr$-$\xr$-bimodule. Via the isomorphism in Subsection 2.1,  $$_{\xr}\xr_{\xr}\otimes_{\xr}\xc\simeq{\xc}\mbox{, given by multiplication,}$$ is an isomorphism of $C_\xr$-$C_\xr$-bimodules, where the module structure of ${\xc}$ is given by multiplication.
Denote such bimodule $\xc$ by $_{\xr}C_{\xr}$.

\subsection{Simple $\xr$-$\xc$-bimodules} Since $\xr\otimes_{\xr}\xc^{\mathrm{op}}\simeq\xc$, $_{\xr}\xc_{\xc}$ is the only simple $\xr$-$\xc$-bimodule. Via isomorphisms in Subsections 2.1 and 2.3,  $$_{\xr}\xc_{\xc}\otimes_{\xr}\xc\simeq{\xc\oplus\xc}\mbox{, given by }(a\otimes b)\mapsto(ab,\overline{a}b),$$ is an isomorphism of $C_\xr$-$C_\xc$-bimodules, where the module structure of $\xc\oplus\xc$ is given by $c\curvearrowright(a,b)=(ca,cb)$ and $(a,b)\curvearrowleft(c,d)=(ac,bd)$. We denote such bimodule $\xc\oplus\xc$ by ${_{\xr}C_{\xc}}$.

\subsection{Simple $\xc$-$\xr$-bimodules} Since $\xc\otimes_{\xr}\xr^{\mathrm{op}}\simeq\xc$, ${_{\xc}\xc_{\xr}}$ is the only simple $\xc$-$\xr$-bimodule. Via isomorphisms in Subsections 2.1 and 2.3,  $$_{\xc}\xc_{\xr}\otimes_{\xr}\xc\simeq{\xc\oplus\xc}\mbox{, given by }(a\otimes b)\mapsto(ab,\overline{a}b),$$ is an isomorphism of $C_\xc$-$C_\xr$-bimodules, where the module structure of $\xc\oplus\xc$ is given by $(c,d)\curvearrowright(a,b)=(ca,db)$ and $(a,b)\curvearrowleft c=(ac,bc)$. We denote such bimodule $\xc\oplus\xc$ by ${_{\xc}C_{\xr}}$.

\subsection{Simple $\xr$-$\xh$-bimodules} Since $\xr\otimes_{\xr}\xh^{\mathrm{op}}\simeq\xh^{\mathrm{op}}$, $_{\xr}\xh_{\xh}$ is the only simple $\xr$-$\xh$-bimodule. Via isomorphisms in Subsections 2.1 and 2.2,  $$_{\xr}\xh_{\xh}\otimes_{\xr}\xc\simeq M_2(\xc) \mbox{, given by }\begin{bmatrix} a & b \\ -\overline{b} & \overline{a} \end{bmatrix}\otimes c\mapsto\begin{bmatrix} ac & bc\\-\overline{b}c &\overline{a}c\end{bmatrix},$$ is an isomorphism of $C_\xr$-$C_\xh$-bimodules, where the module structure of $M_2(\xc)$ is given by scalar multiplication from the left and matrix multiplication from the right. We denote such bimodule $M_2(\xc)$ by ${_{\xr}C_{\xh}}$.

\subsection{Simple $\xh$-$\xr$-bimodules} Since $\xh\otimes_{\xr}\xr^{\mathrm{op}}\simeq\xh$, $_{\xh}\xh_{\xr}$ is the only simple $\xh$-$\xr$-bimodule. Via isomorphisms in Subsections 2.1 and 2.2, $$_{\xh}\xh_{\xr}\otimes_{\xr}\xc\simeq M_2(\xc)\mbox{, given by }\begin{bmatrix} a & b \\ -\overline{b} & \overline{a} \end{bmatrix}\otimes c\mapsto\begin{bmatrix} ac & bc\\-\overline{b}c &\overline{a}c\end{bmatrix},$$ is an isomorphism of $C_\xh$-$C_\xr$-bimodules, where the module structure of $M_2(\xc)$ is given by matrix multiplication from the left and scalar multiplication from the right. We denote such bimodule $M_2(\xc)$ by ${_{\xh}C_{\xr}}$.

\subsection{Simple $\xh$-$\xh$-bimodules} Since $\xh\otimes_{\xr}\xh^{\mathrm{op}}\simeq M_4(\xr)$ (see \cite[Theorem 4.3.1]{DK}), $_{\xh}\xh_{\xh}$ is the only simple $\xh$-$\xh$-bimodule. Via isomorphism in Subsection 2.2,  $$_{\xh}\xh_{\xh}\otimes_{\xr}\xc\simeq M_2(\xc)\mbox{, given by }\begin{bmatrix} a & b \\ -\overline{b} & \overline{a} \end{bmatrix}\otimes c\mapsto\begin{bmatrix} ac & bc\\-\overline{b}c &\overline{a}c\end{bmatrix},$$ is an isomorphism of $C_\xh$-$C_\xh$-bimodules, where the module structure of $M_2(\xc)$ is given by matrix multiplication. We denote such bimodule $M_2(\xc)$ by ${_{\xh}C_{\xh}}$.

\subsection{Simple $\xh$-$\xc$-bimodules} Since $\xh\otimes_{\xr}\xc^{\mathrm{op}}\simeq M_2(\xc)$ as algebras, $_{\xh}\xc^2_{\xc}$ (2-dimensional column vectors over $\xc$) is the only simple $\xh$-$\xc$-bimodule. Via isomorphisms in Subsections 2.2 and 2.3, we can check that $$_{\xh}\xc^2_{\xc}\otimes_{\xr}\xc\simeq M_2(\xc)\mbox{, given by }\begin{bmatrix} a\\b \end{bmatrix}\otimes c\mapsto\begin{bmatrix} ac&-\overline{b}c\\bc&\overline{a}c \end{bmatrix},$$ is an isomorphism of $C_\xh$-$C_\xc$-bimodules, where the module structure of $M_2(\xc)$ is given by matrix multiplication after viewing $(a,b)\in C_\xc$ as $\begin{bmatrix}a&0\\0&b\end{bmatrix}\in M_2(\xc)$. We denote such bimodule $M_2(\xc)$ by ${_{\xh}C_{\xc}}$.

\subsection{Simple $\xc$-$\xh$-bimodules} Since $\xc\otimes_{\xr}\xh^{\mathrm{op}}\simeq M_2(\xc)^{\mathrm{op}}$ as algebras, ${_{\xc}\xc^2_{\xh}}$ (2-dimensional row vectors over $\xc$) is the only simple $\xc$-$\xh$-bimodule. Via isomorphisms in Subsections 2.2 and 2.3, we can check that  $${_{\xc}\xc^2_{\xh}}\otimes_{\xr}\xc\simeq M_2(\xc)\mbox{, given by }\begin{bmatrix} a&b \end{bmatrix}\otimes c\mapsto\begin{bmatrix} ac&bc\\-\overline{b}c&\overline{a}c \end{bmatrix},$$ is an isomorphism of $C_\xc$-$C_\xh$-bimodules, where the module structure of $M_2(\xc)$ is given by matrix multiplication after viewing $(a,b)\in C_\xc$ as $\begin{bmatrix}a&0\\0&b\end{bmatrix}\in M_2(\xc)$. We denote such bimodule $M_2(\xc)$ by ${_{\xc}C_{\xh}}$.

\subsection{Simple $\xc$-$\xc$-bimodules} Since $\xc\otimes_{\xr}\xc\simeq\xc\times\xc$ as algebras, ${_{\xc}\xc_{\xc}}$ and $_{\xc}\overline{\xc}_{\xc}$ are all the simple $\xc$-$\xc$-bimodules, where ${_{\xc}\overline{\xc}_{\xc}}$ equals $\xc$ as a set with the module structure given by $a\curvearrowright c \curvearrowleft b=ac\overline{b}$. Via isomorphism in Subsection~2.3,  $${_{\xc}\xc_{\xc}}\otimes_{\xr}\xc\simeq\xc\oplus\xc\mbox{, given by }a\otimes c\mapsto (ac,\overline{a}c),$$ $${_{\xc}\overline{\xc}_{\xc}}\otimes_{\xr}\xc\simeq\xc\oplus\xc\mbox{, given by }a\otimes c\mapsto (ac,\overline{a}c),$$ are isomorphisms of $C_\xc$-$C_\xc$-bimodules, where the module structure of the first $\xc\oplus\xc$ is given by $(a,b)\curvearrowright(c,d)=(ac,bd)$ and $(c,d)\curvearrowleft(a,b)=(ca,db)$, and the module structure of the second one is given by $(a,b)\curvearrowright(c,d)=(ac,bd)$ and $(c,d)\curvearrowleft(a,b)=(cb,da)$. We denote the first bimodule $\xc\oplus\xc$ by ${_{\xc}C_{\xc}}$ and the second one by ${_{\xc}\overline{C}_{\xc}}$.

\begin{rem}
	The isomorphisms given in this section are not unique. We fix them since they will be used in the next section (related to the construction of $\Gamma$ and the isomorphism $\Psi$). Under some slight modifications, it makes no essential differences if we choose other isomorphisms.
\end{rem}

\section{The complexified quiver presentation}

\subsection{Tensor algebras of modulated quivers over $\xr$}
Let $Q=(Q_0,Q_1,s,t)$ be a finite quiver, where $Q_0$ is a finite set of vertices, $Q_1$ is a finite set of arrows, $s\colon Q_1\rightarrow Q_0$ maps an arrow to its starting vertex and $t\colon Q_1\rightarrow Q_0$ maps an arrow to its terminal vertex.

A \textbf{modulation} $\mathcal{M}$ of $Q$ (over $\xr$) is a map given below: on each vertex $i$, $\mathcal{M}(i)$ is a division ring $\xr$, $\xh$ or $\xc$; on each arrow $\alpha$, $\mathcal{M}(\alpha)$ is one of simple $\mathcal{M}(t(\alpha))-\mathcal{M}(s(\alpha))$-bimodules in Section~2. In this case, we call $(Q,\mathcal{M})$ a \textbf{modulated quiver} (over $\xr$); compare definitions in \cite{B,DR}.

The tensor algebra of a modulated quiver $(Q,\mathcal{M})$ is the tensor algebra of the bimodule $\underset{\alpha\in Q_1}{\oplus}\mathcal{M}(\alpha)$ over the ring $\underset{i\in Q_0}{\prod}\mathcal{M}(i)$, that is, $$T(Q,\mathcal{M}):=(\underset{i\in Q_0}{\prod}\mathcal{M}(i))\oplus(\underset{\alpha\in Q_1}{\bigoplus}\mathcal{M}(\alpha))\oplus(\underset{\alpha\in Q_1}{\bigoplus}\mathcal{M}(\alpha))^{\otimes2}\oplus\cdots.$$

For a path $p=\alpha_n\alpha_{n-1}\cdots\alpha_1$ in $Q$, set $$\mathcal{M}(p):=\mathcal{M}(\alpha_n)\otimes_{\mathcal{M}(s(\alpha_n))}\mathcal{M}(\alpha_{n-1})\otimes_{\mathcal{M}(s(\alpha_{n-1}))}\cdots\otimes_{\mathcal{M}(s(\alpha_2))}\mathcal{M}(\alpha_1).$$ In this article, when we say path, we mean a path of positive length. Since the bimodule structure is compatible with the composition of paths, $$T(Q,\mathcal{M})=(\underset{i\in Q_0}{\prod}\mathcal{M}(i))\oplus(\underset{p \mbox{ path in }Q}{\bigoplus}\mathcal{M}(p)).$$

\begin{defn}
	We say that the modulation $\mathcal{M}$ in a modulated quiver $(Q,\mathcal{M})$ over $\xr$ is \textbf{v-uniform} (with $D$) if $\mathcal{M}(i)=\mathcal{M}(j)(=D$, where $D$ is $\xr$, $\xh$ or $\xc)$ for any $i,j\in Q_0$.
\end{defn}

If $\mathcal{M}$ is v-uniform with $D$, then for each path $p=\alpha_n\alpha_{n-1}\cdots\alpha_1$ in $Q$,  $$\mathcal{M}(p):=\mathcal{M}(\alpha_n)\otimes_{D}\mathcal{M}(\alpha_{n-1})\otimes_{D}\cdots\otimes_{D}\mathcal{M}(\alpha_1)$$ is a simple $D$-$D$-bimodule generated by any nonzero element. Since the simple $D$-$D$-bimodule $\mathcal{M}(\alpha)$ equals $D$ as a set, it contains an element  $1_{\mathcal{M}(\alpha)}=1_D$. Also, set $$1_{\mathcal{M}(p)}:=1_{\mathcal{M}(\alpha_n)}\otimes 1_{\mathcal{M}(\alpha_{n-1})}\otimes\cdots\otimes 1_{\mathcal{M}(\alpha_1)}\in\mathcal{M}(p).$$ Then each element of $T(Q,\mathcal{M})$ can be written as a $D$-linear combination of the basis $\{1_{\mathcal{M}(p)}\;|\;p \mbox{ path in }Q\}$.

Recall that the path algebra of $Q$ over the division ring $D$ is defined as $$DQ:=(\underset{i\in Q_0}{\prod}De_i)\oplus(\underset{p \mbox{ path in }Q}{\bigoplus}Dp),$$ where $e_i$ is the identity ``path'' for each vertex $i$, and multiplication is induced by composition of paths.
\begin{lem}\label{pathalg}
	We have the following statements for a modulated quiver $(Q,\mathcal{M})$.\\
	1) If $\mathcal{M}$ is v-uniform with $\xr$, then $T(Q,\mathcal{M})\simeq \xr Q$.\\
	2) If $\mathcal{M}$ is v-uniform with $\xh$, then $T(Q,\mathcal{M})\simeq \xh Q$.\\
	3) If $\mathcal{M}$ is v-uniform with $\xc$ and for each $\alpha\in Q_1$, $\mathcal{M}(\alpha)=\xc$, then $T(Q,\mathcal{M})\simeq \xc Q$.
\end{lem}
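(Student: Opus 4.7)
The plan is to prove all three statements by constructing an explicit graded algebra isomorphism sending each generator of $\mathcal{M}(\alpha)$ to the corresponding arrow $\alpha$ in $DQ$. The first task is to identify, in each case, which simple bimodule $\mathcal{M}(\alpha)$ must be. Since $\mathcal{M}$ is v-uniform with $D$, every $\mathcal{M}(\alpha)$ is a simple $D$-$D$-bimodule. For $D=\xr$ the list in Subsection~2.4 forces $\mathcal{M}(\alpha)={}_\xr\xr_\xr$; for $D=\xh$ the list in Subsection~2.8 forces $\mathcal{M}(\alpha)={}_\xh\xh_\xh$; for $D=\xc$ there are two candidates (Subsection~2.11), and the hypothesis of 3) rules out $\overline{\xc}$, leaving $\mathcal{M}(\alpha)={}_\xc\xc_\xc$. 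In each case $\mathcal{M}(\alpha)\simeq D$ as $D$-$D$-bimodules, via the action by multiplication, and in particular is generated as a bimodule by a distinguished element $u_\alpha$ corresponding to $1\in D$.

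Next I would analyze the bimodule $\mathcal{M}(p)$ on each path. Because $D\otimes_D D\simeq D$ as $D$-$D$-bimodules (valid also for $D=\xh$ since $\xh$ is a free $\xh$-$\xh$-bimodule of rank one on $1$), an easy induction on path length shows $\mathcal{M}(p)\simeq D$ with canonical generator
\[
u_p:=u_{\alpha_n}\otimes u_{\alpha_{n-1}}\otimes\cdots\otimes u_{\alpha_1}.
\]
Thus each homogeneous component of $T(Q,\mathcal{M})$ indexed by a path $p$ is a free rank-one $D$-$D$-bimodule, precisely matching the $D$-$D$-bimodule $Dp$ inside the path algebra $DQ$.

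Now define
\[
\phi\colon T(Q,\mathcal{M})\longrightarrow DQ,\qquad e_i\mapsto e_i,\quad u_p\mapsto p,
\]
and extend $D$-$D$-bilinearly on each component. To see that $\phi$ is an algebra homomorphism, I would check that, under the identifications above, the multiplication of $T(Q,\mathcal{M})$ (given by the tensor product $\mathcal{M}(p)\otimes_D\mathcal{M}(q)\to\mathcal{M}(pq)$ whenever $s(p)=t(q)$, and zero otherwise) corresponds exactly to the concatenation of paths in $DQ$; this is immediate from the choice of generators $u_p$, since $u_p\otimes u_q\mapsto u_{pq}$. Bijectivity is then a component-wise check: on the degree-zero piece $\phi$ is the identity on $\prod_i De_i$, and on each path component $\phi$ is the bimodule isomorphism $\mathcal{M}(p)\simeq Dp$ sending $u_p\mapsto p$.

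The only spot that requires care is the $\xh$ case, where $D$ is non-commutative: one must verify that $\xh\otimes_\xh\xh\to\xh$, $x\otimes y\mapsto xy$, really is a $\xh$-$\xh$-bimodule isomorphism, and that the generator $u_p$ is well-defined independent of how one iterates the tensor product. This is standard but is the main place where one could be sloppy. The $\xc$ case, by contrast, is almost automatic once the condition $\mathcal{M}(\alpha)=\xc$ is used to exclude any appearance of ${}_\xc\overline{\xc}{}_\xc$, which would otherwise introduce conjugations into the multiplication and break the identification with $\xc Q$.
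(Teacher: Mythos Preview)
Your proposal is correct and follows essentially the same approach as the paper: both identify $\mathcal{M}(\alpha)\simeq D$ via the generator $1$, send that generator to the arrow $\alpha$, and extend to an algebra isomorphism. The paper's proof is more terse, invoking only the maps $f_0\colon\mathcal{M}(i)\to De_i$ and $f_1\colon\mathcal{M}(\alpha)\to D\alpha$ on degrees $0$ and $1$ and then appealing to the universal property of the tensor algebra, whereas you explicitly track the induced isomorphism $\mathcal{M}(p)\simeq Dp$ on every path; this extra detail is fine but not needed.
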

\begin{proof}
	Assume that $\mathcal{M}$ is v-uniform with $D$. For each $i\in Q_0$, there is an isomorphism of algebras $f_0\colon\mathcal{M}(i)\rightarrow De_i$, mapping $1_{\mathcal{M}(i)}$ to $1e_i$. For each $\alpha\in Q_1$, via $f_0$, there is an isomorphism of bimodules $f_1\colon\mathcal{M}(\alpha)\rightarrow D\alpha$, mapping $1_{\mathcal{M}(\alpha)}$ to $1\alpha$. Then $f_0$ and $f_1$ induce an isomorphism of algebras $T(Q,\mathcal{M})\simeq DQ$.
\end{proof}

When the modulation of $Q$ is v-uniform with $\xc$, $T(Q,\mathcal{M})$ is not always isomorphic to a path algebra; see Example \ref{notpathalg}. In this case, sometimes we can change the modulation as below. 

Given a vertex $v$ in $Q$, set $v^+:=\{\alpha\in Q_1\,|\,s(\alpha)=v\},\mbox{ }v^-:=\{\alpha\in Q_1\,|\,t(\alpha)=v\}.$ Assume that there are no loops on $v$ (i.e. $v^+\cap v^-=\emptyset$). A new modulation $\mathcal{M'}$ of $Q$ is defined to be: $$\forall i\in Q_0\mbox{, }\mathcal{M'}(i)=\xc,$$
$$\forall\alpha\in Q_1\mbox{, }\mathcal{M'}(\alpha)=\begin{cases}
\xc & \text{if } \alpha\in v^+\cup v^-,\mathcal{M}(\alpha)=\overline{\xc}\\
\overline{\xc} & \text{if } \alpha\in v^+\cup v^-,\mathcal{M}(\alpha)=\xc\\
\mathcal{M}(\alpha) &\text{if } \alpha\notin v^+\cup v^-
\end{cases}.$$

Now let $$\phi_0\colon\underset{i\in Q_0}{\prod}\mathcal{M}(i)\rightarrow\underset{i\in Q_0}{\prod}\mathcal{M}'(i), (\lambda_i)_{i\in Q_0}\mapsto(\lambda'_i)_{i\in Q_0},$$ where $\lambda_i\in\xc$, 
$\lambda'_i=\begin{cases}
\lambda_i& \text{if } i\neq v\\\overline{\lambda_i} & \text{if } i=v
\end{cases}.$ Then $\phi_0$ is an isomorphism of $\xr$-algebras.

Let $$\phi_1\colon\underset{\alpha\in Q_1}{\oplus}\mathcal{M}(\alpha)\rightarrow\underset{\alpha\in Q_1}{\oplus}\mathcal{M}'(\alpha), (z_{\alpha})_{\alpha\in Q_1}\mapsto(z'_{\alpha})_{\alpha\in Q_1},$$ where $z_{\alpha}\in\xc$, 
$z'_{\alpha}=\begin{cases}
z_{\alpha}& \text{if } \alpha\notin v^-\\\overline{z_{\alpha}} & \text{if } \alpha\in v^-
\end{cases}.$ Via $\phi_0$, $\phi_1$ becomes an isomorphism of $\underset{i\in Q_0}{\prod}\mathcal{M}(i)$-bimodules.

Induced by $\phi_0$ and $\phi_1$, we get an isomorphism of $\xr$-algebras $$\phi\colon T(Q,\mathcal{M})\longrightarrow T(Q,\mathcal{M'}).$$ Hence we have the following result.

\begin{lem}\label{change mod}
	Keep the setting above. We have an isomorphism of $\xr$-algebras $$\phi\colon T(Q,\mathcal{M})\longrightarrow T(Q,\mathcal{M'})$$ such that $\phi(\mathcal{M}(p))=\mathcal{M'}(p)$ for each path $p$ in $Q$.
\end{lem}

Recall that a \textbf{bound quiver} over a division ring $D$ is a pair $(Q,R)$, where $Q$ is a quiver and $R$ is a subset of the path algebra $DQ$. Let $(Q,\mathcal{M})$ be a v-uniform modulated quiver with $D$ and $(Q,R)$ a bound quiver. We set $$\langle R\rangle_{\mathcal{M}}:=\langle\{\sum_{i=1}^{m}d_i1_{\mathcal{M}(p_i)}\in T(Q,\mathcal{M})\;|\;\sum_{i=1}^{m}d_ip_i\in R\}\rangle,$$ an ideal of $T(Q,\mathcal{M})$ generated by $R$. We remark that in this case, any ideal $I$ of $T(Q,\mathcal{M})$ can be generated by $R=\{\sum_{i=1}^{m}d_ip_i\;|\;\sum_{i=1}^{m}d_i1_{\mathcal{M}(p_i)}\in I\}$. Moreover, for the cases in Lemma \ref{pathalg}, we have $$T(Q,\mathcal{M})/\langle R\rangle_{\mathcal{M}}\simeq DQ/\langle R\rangle.$$

\medskip
\subsection{Construction of the quiver $\Gamma$}

Given a modulated quiver $(Q,\mathcal{M})$, we construct $\Gamma=(\Gamma_0,\Gamma_1,s,t)$ from it. This construction appeared in \cite[Section 9.3]{G} for a general setting. We are inspired by \cite[Section 6]{DD} and describe the construction in more details.

\textbf{Vertices of $\Gamma$}:
\begin{enumerate}
	\item Each $i\in Q_0$ with $\mathcal{M}(i)=\mathbb{R}$ or $\mathbb{H}$ gives a vertex $i$ in $\Gamma_0$.
	\item Each $i\in Q_0$ with $\mathcal{M}(i)=\mathbb{C}$ gives two vertices $i$ and $\overline{i}$ in $\Gamma_0$.
\end{enumerate}

\textbf{Arrows of $\Gamma$}: Let $\alpha:i\rightarrow j\in Q_1$ with $\mathcal{M}(\alpha)=S,$ a simple bimodule.
\begin{enumerate}
	\item If $S\in\mathbb{R}\mbox{-}\mathbb{R}\mbox{-mod}$ or $\mathbb{H}\mbox{-}\mathbb{H}\mbox{-mod}$, it gives an arrow $\alpha:i\rightarrow j$ in $\Gamma_1$.
	\item If $S\in\mathbb{R}\mbox{-}\mathbb{C}\mbox{-mod}$ or $\mathbb{H}\mbox{-}\mathbb{C}\mbox{-mod}$, it gives arrows $\alpha:i\rightarrow j$ and $\overline{\alpha}:\overline{i}\rightarrow j$ in $\Gamma_1$. \item If $S\in\mathbb{C}\mbox{-}\mathbb{R}\mbox{-mod}$ or $\mathbb{C}\mbox{-}\mathbb{H}\mbox{-mod}$, it gives arrows $\alpha:i\rightarrow j$ and $\overline{\alpha}:i\rightarrow\overline{j}$ in $\Gamma_1$. \item If $S\in\mathbb{R}\mbox{-}\mathbb{H}\mbox{-mod}$ or $\mathbb{H}\mbox{-}\mathbb{R}\mbox{-mod}$, it gives arrows $\alpha:i\rightarrow j$ and $\overline{\alpha}:i\rightarrow j$ in $\Gamma_1$.
	\item If $S\simeq\mathbb{C}\in\mathbb{C}\mbox{-}\mathbb{C}\mbox{-mod}$, it gives arrows $\alpha:i\rightarrow j$ and $\overline{\alpha}:\overline{i}\rightarrow\overline{j}$ in $\Gamma_1$.
	\item If $S\simeq\overline{\mathbb{C}}\in\mathbb{C}\mbox{-}\mathbb{C}\mbox{-mod}$, it gives arrows $\alpha:\overline{i}\rightarrow j$ and $\overline{\alpha}:i\rightarrow\overline{j}$ in $\Gamma_1$.
\end{enumerate}

Then the quiver $\Gamma=(\Gamma_0,\Gamma_1,s,t)$ is well-defined and there is an \textbf{automorphism} $\tau$ of $\Gamma$ defined as follows.
\begin{enumerate}
	\item  For each $i$ in $\Gamma_0$, if $\overline{i}$ exists, $\tau(i)=\overline{i}$ and $\tau(\overline{i})=i$; if not, $\tau(i)=i$.
	\item  For each $\alpha$ in $\Gamma_1$, if $\overline{\alpha}$ exists, $\tau(\alpha)=\overline{\alpha}$ and $\tau(\overline{\alpha})=\alpha$; if not, $\tau(\alpha)=\alpha$.
\end{enumerate}

We also have a surjection $$\pi\colon\Gamma\rightarrow Q,$$ where $\pi(i)=i=\pi(\overline{i})$ (if $\overline{i}$ exists), $\forall i\in Q_0$, and $\pi(\alpha)=\alpha=\pi(\overline{\alpha})$ (if $\overline{\alpha}$ exists), $\forall \alpha\in Q_1$. We say that $j$ in $\Gamma_0$ is a \textbf{fiber} of $i$ in $Q_0$ if $\pi(j)=i$ and that $\beta$ in $\Gamma_1$ is a \textbf{fiber} of $\alpha$ in $Q_1$ if $\pi(\beta)=\alpha$. A path $p'$ in $\Gamma$ is called a \textbf{fiber} of a path $p$ in $Q$ if the arrows in $p'$ are fibers of those in $p$ in orders. Clearly, each path in $\Gamma$ is a fiber of some (unique) path in $Q$.

\begin{exm}
{\rm 1)} Let $Q=i\overset{\alpha}{\rightarrow }j\overset{\beta}{\rightarrow} k\overset{\gamma}{\rightarrow }l$, $\mathcal{M}(i)=\xh$, $\mathcal{M}(j)=\xr$, $\mathcal{M}(k)=\xc$ and $\mathcal{M}(l)=\xh$. Then $$\Gamma=\makecell{\xymatrix@R=1ex{ & & k \ar[dr]^{\gamma} & \\i \ar@<.5ex>[r]^{\alpha} \ar@<-.5ex>[r]_{\overline{\alpha}}&j\ar[ur]^{\beta}\ar[dr]_{\overline{\beta}}& &l\\ &  &\overline{k}\ar[ur]_{\overline{\gamma}}& }}.$$ In this example, paths $\gamma\beta\alpha$, $\overline{\gamma}\overline{\beta}\alpha$, $\gamma\beta\overline{\alpha}$ and $\overline{\gamma}\overline{\beta}\overline{\alpha}$ in $\Gamma$ are fibers of $\gamma\beta\alpha$ in $Q$.\\
{\rm 2)} If $\mathcal{M}$ is v-uniform with $\xr$ or $\xh$, then $\Gamma=Q$. \\
{\rm 3)} If $\mathcal{M}$ is v-uniform with $\xc$ and $\mathcal{M}(\alpha)=\xc$ for each arrow $\alpha$, then $\Gamma=Q\sqcup Q$.\\
{\rm 4)} Let $Q=\xymatrix@R=1ex{i\ar@<.5ex>[r]^{\alpha} \ar@<-.5ex>[r]_{\beta}&j}$, $\mathcal{M}(i)=\mathcal{M}(j)=\xc$, $\mathcal{M}(\alpha)=\xc$ and $\mathcal{M}(\beta)=\overline{\xc}$. Then $$\Gamma=\makecell{\xymatrix@R=3ex@C=12ex{j&i\ar[l]_{\alpha}\ar[dr]^{\overline{\beta}}&\\ &\overline{i}\ar[ul]^{\beta}\ar[r]_{\overline{\alpha}}&\overline{j}}}.$$
\end{exm}

\subsection{The quiver presentation of $T(Q,\mathcal{M})/I\otimes_{\xr}\xc$}

Given a modulated quiver $(Q,\mathcal{M})$, we investigate the complexified algebra $T(Q,\mathcal{M})\otimes_{\xr}\xc$. 

There is a canonical isomorphism of algebras $$\psi_1\colon T(Q,\mathcal{M})\otimes_{\xr}\xc\simeq T(\underset{i\in Q_0}{\prod}(\mathcal{M}(i)\otimes_{\xr}\xc),\underset{\alpha\in Q_1}{\bigoplus}(\mathcal{M}(\alpha)\otimes_{\xr}\xc)),$$ mapping $(a_i)_{i\in Q_0}\otimes\lambda$ to $(a_i\otimes\lambda)_{i\in Q_0}$, $(m_{\alpha})_{\alpha\in Q_1}\otimes\lambda$ to $(m_{\alpha}\otimes\lambda)_{\alpha\in Q_1}$, and $(m_n\otimes m_{n-1}\otimes\cdots\otimes m_1)\otimes \lambda$ to $(m_n\otimes \lambda)\otimes(m_{n-1}\otimes 1_{\xc})\otimes \cdots\otimes (m_1\otimes 1_{\xc})$. 

For each $\alpha: i\rightarrow j\in Q_1$, set
\begin{equation*}
	C_{\alpha}=\begin{cases}
		_{\mathcal{M}(j)}\overline{C}_{\mathcal{M}(i)} & \text{if } \mathcal{M}(\alpha)={_{\xc}\overline{\xc}_{\xc}}\\
		_{\mathcal{M}(j)}C_{\mathcal{M}(i)} &\text{otherwise}
	\end{cases}.
\end{equation*} For bimodules $_{\mathcal{M}(j)}C_{\mathcal{M}(i)}$ and $ _{\mathcal{M}(j)}\overline{C}_{\mathcal{M}(i)}$, see Subsections 2.1-2.12. Then the explicit isomorphisms in Subsections 2.1-2.12 induce an isomorphism of tensor algebras $$\psi_2:T(\underset{i\in Q_0}{\prod}(\mathcal{M}(i)\otimes_{\xr}\xc),\underset{\alpha\in Q_1}{\bigoplus}(\mathcal{M}(\alpha)\otimes_{\xr}\xc))\simeq T(\underset{i\in Q_0}{\prod}C_{\mathcal{M}(i)},\underset{\alpha: i\rightarrow j\in Q_1}{\bigoplus}C_{\alpha}).$$

Denote by $\mathbf{e}$ the idempotent of $T(Q,\mathcal{M})\otimes_{\xr}\xc$ such that
$$\psi_2\psi_1(\mathbf{e})=(e_{C_{\mathcal{M}(i)}})_{i\in Q_0}\in\underset{i\in Q_0}{\prod}C_{\mathcal{M}(i)},$$ where
\begin{equation*}
e_{C_{\mathcal{M}(i)}}=\begin{cases}
1_{C_{\mathcal{M}(i)}} & \text{if } \mathcal{M}(i)=\xr \text{ or } \xc\\
[\begin{smallmatrix}
1&0\\0&0\end{smallmatrix}] &\text{if } \mathcal{M}(i)=\xh
\end{cases}.\end{equation*}

By the definition of $\mathbf{e}$, $T(Q,\mathcal{M})\otimes_{\mathbb{R}}\xc$ is Morita equivalent to $\mathbf{e}(T(Q, \mathcal{M})\otimes_{\xr}\xc)\mathbf{e}$. We show that $\Gamma$ is a quiver presentation of $T(Q, \mathcal{M})\otimes_{\xr}\xc$; compare \cite[Section 9.3]{G}.

\begin{prop}\label{morita path alg}
Let $\Gamma$ be the quiver constructed by a modulated quiver $(Q,\mathcal{M})$. Then there is an isomorphism of $\xc$-algebras
$$\Psi\colon \mathbf{e}(T(Q, \mathcal{M})\otimes_{\xr}\xc)\mathbf{e}\longrightarrow\xc \Gamma.$$ 
\end{prop}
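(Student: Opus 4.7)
The plan is to combine the explicit isomorphisms $\psi_1$ and $\psi_2$ with the fact that cutting by a full idempotent commutes with forming tensor algebras of bimodules, and then to read off the vertices and arrows of $\Gamma$ from the case analysis of Section~2. Set
\[R:=\prod_{i\in Q_0}C_{\mathcal{M}(i)},\qquad M:=\bigoplus_{\alpha\in Q_1}{_{\mathcal{M}(t(\alpha))}C_{\mathcal{M}(s(\alpha))}},\]
so that $\psi_2\psi_1$ identifies $T(Q,\mathcal{M})\otimes_{\xr}\xc$ with the tensor algebra $T_R(M)$ and $\mathbf{e}\in R$ is the prescribed idempotent.

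First I would refine $\mathbf{e}$ into a sum of primitive idempotents indexed by $\Gamma_0$: for $\mathcal{M}(i)=\xr$ the primitive idempotent is $1$; for $\mathcal{M}(i)=\xh$ it is $[\begin{smallmatrix}1&0\\0&0\end{smallmatrix}]$; and for $\mathcal{M}(i)=\xc$ the idempotent $1_{\xc\times\xc}$ splits further as $(1,0)+(0,1)$, producing the two vertices $i$ and $\overline{i}$ of $\Gamma$. Writing $\mathbf{e}=\sum_{v\in\Gamma_0}f_v$, this gives an isomorphism of $\xc$-algebras $\mathbf{e}R\mathbf{e}\simeq\prod_{v\in\Gamma_0}\xc$, which is exactly the semisimple part of $\xc\Gamma$.

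Next I would compute
\[\mathbf{e}M\mathbf{e}=\bigoplus_{\alpha\in Q_1}\bigoplus_{v,u}f_v\cdot{_{\mathcal{M}(t(\alpha))}C_{\mathcal{M}(s(\alpha))}}\cdot f_u,\]
where $v$ and $u$ range over fibers of $t(\alpha)$ and $s(\alpha)$ respectively. Using the explicit descriptions in Subsections~2.4--2.12, one checks that each such summand is either zero or one-dimensional over $\xc$, and that the nonzero summands biject with the arrows of $\Gamma$ prescribed by cases (1)--(6) of the construction of $\Gamma_1$. Thus $\mathbf{e}M\mathbf{e}$ is free over $\xc$ on a basis indexed by $\Gamma_1$, with left and right actions of $\mathbf{e}R\mathbf{e}$ matching $t$ and $s$ in $\Gamma$.

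Finally I would upgrade these two isomorphisms to an isomorphism of $\xc$-algebras
\[\Psi\colon\mathbf{e}T_R(M)\mathbf{e}\xrightarrow{\ \sim\ }T_{\mathbf{e}R\mathbf{e}}(\mathbf{e}M\mathbf{e})\simeq\xc\Gamma.\]
The only non-formal step is the first isomorphism; it reduces to the identity $\mathbf{e}(M^{\otimes_R n})\mathbf{e}\simeq(\mathbf{e}M\mathbf{e})^{\otimes_{\mathbf{e}R\mathbf{e}}n}$, which holds because $\mathbf{e}$ is full in $R$ (each factor $C_{\mathcal{M}(i)}$ is simple, so $R\mathbf{e}R=R$). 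Writing $1=\sum_k r_k\mathbf{e}s_k$ and inserting this between consecutive tensor factors expresses every element of $\mathbf{e}M^{\otimes_R n}\mathbf{e}$ as a sum of simple tensors of the form $\mathbf{e}m_n\mathbf{e}\otimes\cdots\otimes\mathbf{e}m_1\mathbf{e}$, with the tensors already taken over $\mathbf{e}R\mathbf{e}$.

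The main obstacle I expect is the bookkeeping in the second step: one must choose the generators of the nonzero one-dimensional summands coherently enough that $\Psi$ sends a fiber in $\Gamma$ of a path $p$ in $Q$ to the corresponding path in $\xc\Gamma$, rather than to some nontrivial scalar multiple. Once those generators are fixed, multiplicativity of $\Psi$ is forced by the tensor-algebra structure on both sides.
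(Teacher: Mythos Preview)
Your proposal is correct and follows essentially the same architecture as the paper: compose $\psi_2\psi_1$ with an isomorphism $\psi_3\colon \mathbf{e}T_R(M)\mathbf{e}\simeq T_{\mathbf{e}R\mathbf{e}}(\mathbf{e}M\mathbf{e})$ and then an isomorphism $\psi_4\colon T_{\mathbf{e}R\mathbf{e}}(\mathbf{e}M\mathbf{e})\simeq\xc\Gamma$ obtained by the vertex-and-arrow case analysis.

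The one noteworthy difference is in how $\psi_3$ is justified. You invoke the abstract Morita argument (fullness of $\mathbf{e}$, i.e.\ $R\mathbf{e}R=R$) to get $\mathbf{e}(M^{\otimes_R n})\mathbf{e}\simeq(\mathbf{e}M\mathbf{e})^{\otimes_{\mathbf{e}R\mathbf{e}}n}$; the paper instead proves the slightly stronger $R$-bimodule isomorphism $M^{\otimes_R n}\simeq M\mathbf{e}\otimes_{\mathbf{e}R\mathbf{e}}(\mathbf{e}M\mathbf{e})^{\otimes(n-2)}\otimes_{\mathbf{e}R\mathbf{e}}\mathbf{e}M$ by hand, reducing to the length-two case and writing down the explicit splitting $M_2(\xc)\simeq M_2(\xc)[\begin{smallmatrix}1&0\\0&0\end{smallmatrix}]\otimes[\begin{smallmatrix}1&0\\0&0\end{smallmatrix}]M_2(\xc)$ when the middle vertex carries $\xh$. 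Your route is cleaner; the paper's buys exactly the coherence you flag as the ``main obstacle'': because $\psi_3$ and $\psi_4$ are given by explicit formulas on elements, one can trace a generator of $\mathcal{M}(p)\otimes_\xr\xc$ through $\Psi$ and verify directly that it lands on a sum of the fiber paths (this is used repeatedly later, e.g.\ in Example~\ref{aft-cpl-npth} and Lemma~\ref{sym}). If you keep the abstract argument for $\psi_3$, you should still record concrete generators in each case of $\psi_4$ and check one instance of the $\xh$-middle-vertex composition to confirm that no unwanted scalars appear.
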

\begin{proof}
Denote $\underset{i\in Q_0}{\prod}C_{\mathcal{M}(i)}$ by $D$, $\underset{\alpha\in Q_1}{\bigoplus}C_{\alpha}$ by $M$ and $(e_{C_{\mathcal{M}(i)}})_{i\in Q_0}$ by $\epsilon$. We already have an isomorphism of algebras $$\psi_2\psi_1\colon\mathbf{e}(T(Q,\mathcal{M})\otimes_{\xr}\xc)\mathbf{e}\simeq\epsilon T(D,M)\epsilon.$$

(1). We construct an isomorphism $$\psi_3\colon\epsilon T(D,M)\epsilon\simeq T(\epsilon D\epsilon,\epsilon M\epsilon).$$ It can be obtained once there are isomorphisms of $D$-$D$-bimodules $$\underbrace{M\otimes_{D}M\otimes_{D}\cdots\otimes_{D}M}_{n \mbox{ copies}}\simeq M\epsilon \otimes_{\epsilon D\epsilon }\underbrace{\epsilon M\epsilon \otimes_{\epsilon D\epsilon }\cdots\otimes_{\epsilon D\epsilon} \epsilon M\epsilon}_{n-2\mbox{ copies}}\otimes_{\epsilon D\epsilon }\epsilon M, n\geq 2.$$ 

We construct these isomorphisms inductively on $n$ and note that we only need to consider each direct summand $\psi_2\psi_1(\mathcal{M}(p)\otimes_{\xr}\xc)$, where $p$ is a path with length larger than one in $Q$.

For any path $\beta\alpha:i\rightarrow j\rightarrow k$ with length two in $Q$, we construct an isomorphism of $C_{\mathcal{M}(k)}$-$C_{\mathcal{M}(i)}$-bimodules $$
f_2\colon C_{\beta}\otimes_{C_{\mathcal{M}(j)}}C_{\alpha}\simeq C_{\beta}e_{C_{\mathcal{M}(j)}}\otimes_{(e_{C_{\mathcal{M}(j)}}C_{\mathcal{M}(j)}e_{C_{\mathcal{M}(j)}})}e_{C_{\mathcal{M}(j)}}C_{\alpha}.$$If $\mathcal{M}(j)\neq \xh$, then $e_{C_{\mathcal{M}(j)}}=1_{C_{\mathcal{M}(j)}}$ and we set $f_2$ to be the identity map. If $\mathcal{M}=\xh$, then ${_{\mathcal{M}(k)}C_{\mathcal{M}(j)}}$ and ${_{\mathcal{M}(j)}C_{\mathcal{M}(i)}}$ are $M_2(\xc)$ with the $\mathcal{M}(j)$-module structure given by matrix multiplication, see Subsections 2.7-2.11. In this case, set $f_2$ to be the composition of  $M_2(\xc)\otimes_{M_2(\xc)}M_2(\xc)\simeq M_2(\xc)$ given by matrix multiplication and 
\begin{align*}
M_2(\xc)\simeq&M_2(\xc)\begin{bmatrix}1&0\\0&0\end{bmatrix}\otimes_{[\begin{smallmatrix}1&0\\0&0\end{smallmatrix}]M_2(\xc)[\begin{smallmatrix}1&0\\0&0\end{smallmatrix}]}\begin{bmatrix}1&0\\0&0\end{bmatrix} M_2(\xc)\mbox{ given by}\\ \begin{bmatrix}a&b\\c&d\end{bmatrix}\mapsto&\begin{bmatrix}a&0\\c&0\end{bmatrix}\otimes \begin{bmatrix}1&0\\0&0\end{bmatrix}+\begin{bmatrix}b&0\\d&0\end{bmatrix}\otimes\begin{bmatrix}0&1\\0&0\end{bmatrix}.
\end{align*} Hence we obtain an isomorphism $M\otimes_{D}M\simeq M\epsilon\otimes_{\epsilon D\epsilon }\epsilon M$ and we also denote it by $f_2$.

Assume that for $n=k-1$, an isomorphism $f_{k-1}$ is constructed. Then for $n=k$, the isomorphism we want is $$f_n:=(f_2\otimes\underbrace{\mathrm{Id}_{\epsilon M\epsilon}\otimes\dots\otimes\mathrm{Id}_{\epsilon M\epsilon}}_{k-3\mbox{ copies}}\otimes\mathrm{Id}_{\epsilon M})\circ(\mathrm{Id}_{M}\otimes f_{k-1}).$$

(2). Finally, we construct an isomorphism $$\psi_4\colon T(\epsilon D\epsilon, \epsilon M\epsilon)\simeq \xc\Gamma.$$ So we need to construct an isomorphism of algebras $\epsilon D\epsilon\simeq \prod_{i\in \Gamma_0}\xc e_i$ and an isomorphism of bimodules $\epsilon M\epsilon\simeq \oplus_{\alpha\in \Gamma_1}\xc\alpha$.

For each $i\in Q_0$, we construct isomorphisms of algebras as below.\\
$\cdot$ If $\mathcal{M}(i)=\xr$, we have $e_{C_{\xr}}(C_{\xr})e_{C_{\xr}}\simeq \xc e_i \mbox{, given by } a\mapsto ae_i.$\\
$\cdot$ If $\mathcal{M}(i)=\xh$, we have $e_{C_{\xh}}(C_{\xh})e_{C_{\xh}}\simeq \xc e_i \mbox{, given by } \begin{bmatrix}1&0\\0&0\end{bmatrix}\begin{bmatrix} a & b \\ c & d \end{bmatrix}\begin{bmatrix}1&0\\0&0\end{bmatrix}\mapsto ae_i.$\\
$\cdot$ If $\mathcal{M}(i)=\xc$, we have
$e_{C_{\xc}}(C_{\xc})e_{C_{\xc}}\simeq \xc e_i\times\xc e_{\overline{i}} \mbox{, given by } (a,b)\mapsto (ae_i,be_{\overline{i}}).$

For each $\alpha\in Q_1$, via the isomorphisms of algebras above, we construct below isomorphisms of $\xc e_{t(\alpha)}$-$\xc e_{s(\alpha)}$-bimodules.\\
$\cdot$ If $\mathcal{M}(s(\alpha))=\xr$ and $\mathcal{M}(t(\alpha))=\xr$, we have 
$$e_{C_\xr}(_\xr C_{\xr})e_{C_{\xr}}\simeq \xc\alpha \mbox{, given by } a\mapsto a\alpha.$$
$\cdot$ If $\mathcal{M}(s(\alpha))=\xc$ and $\mathcal{M}(t(\alpha))=\xr$, we have 
$$e_{C_\xr}(_\xr C_{\xc})e_{C_{\xc}}\simeq \xc\alpha\oplus\xc\overline{\alpha} \mbox{, given by } (a,b)\mapsto (a\alpha,b\overline{\alpha}).$$
$\cdot$ If $\mathcal{M}(s(\alpha))=\xr$ and $\mathcal{M}(t(\alpha))=\xc$, we have 
$$e_{C_\xc}(_\xc C_{\xr})e_{C_{\xr}}\simeq \xc\alpha\oplus\xc\overline{\alpha} \mbox{, given by } (a,b)\mapsto (a\alpha,b\overline{\alpha}).$$
$\cdot$ If $\mathcal{M}(s(\alpha))=\xh$ and $\mathcal{M}(t(\alpha))=\xr$, we have 
$$e_{C_\xr}(_\xr C_{\xh})e_{C_{\xh}}\simeq \xc\alpha\oplus\xc\overline{\alpha} \mbox{, given by } \begin{bmatrix} a & b \\ c & d \end{bmatrix}\begin{bmatrix}1&0\\0&0\end{bmatrix}\mapsto (a\alpha,c\overline{\alpha}).$$
$\cdot$ If $\mathcal{M}(s(\alpha))=\xr$ and $\mathcal{M}(t(\alpha))=\xh$, we have 
$$e_{C_\xh}(_\xh C_{\xr})e_{C_{\xr}}\simeq \xc\alpha\oplus\xc\overline{\alpha} \mbox{, given by } \begin{bmatrix}1&0\\0&0\end{bmatrix}\begin{bmatrix} a & b \\ c & d \end{bmatrix}\mapsto (a\alpha,b\overline{\alpha}).$$
$\cdot$ If $\mathcal{M}(s(\alpha))=\xh$ and $\mathcal{M}(t(\alpha))=\xh$, we have 
$$e_{C_\xh}(_\xh C_{\xh})e_{C_{\xh}}\simeq \xc\alpha \mbox{, given by } \begin{bmatrix}1&0\\0&0\end{bmatrix}\begin{bmatrix} a & b \\ c & d \end{bmatrix}\begin{bmatrix}1&0\\0&0\end{bmatrix}\mapsto a\alpha.$$
$\cdot$ If $\mathcal{M}(s(\alpha))=\xc$ and $\mathcal{M}(t(\alpha))=\xh$, we have 
$$e_{C_\xh}(_\xh C_{\xc})e_{C_{\xc}}\simeq \xc\alpha\oplus\xc\overline{\alpha} \mbox{, given by } \begin{bmatrix}1&0\\0&0\end{bmatrix}\begin{bmatrix} a & b \\ c & d \end{bmatrix}\mapsto (a\alpha,b\overline{\alpha}).$$
$\cdot$ If $\mathcal{M}(s(\alpha))=\xh$ and $\mathcal{M}(t(\alpha))=\xc$, we have 
$$e_{C_\xc}(_\xc C_{\xh})e_{C_{\xh}}\simeq \xc\alpha\oplus\xc\overline{\alpha} \mbox{, given by } \begin{bmatrix} a & b \\ c & d \end{bmatrix}\begin{bmatrix}1&0\\0&0\end{bmatrix}\mapsto (a\alpha,c\overline{\alpha}).$$
$\cdot$ If $\mathcal{M}(s(\alpha))=\xc$ and $\mathcal{M}(t(\alpha))=\xc$, we have 
$$e_{C_\xc}(_\xc C_{\xc})e_{C_{\xc}}\simeq \xc\alpha\oplus\xc\overline{\alpha} \mbox{, given by } (a,b)\mapsto (a\alpha,b\overline{\alpha});$$
$$e_{C_\xc}(_\xc \overline{C}_{\xc})e_{C_{\xc}}\simeq \xc\alpha\oplus\xc\overline{\alpha} \mbox{, given by } (a,b)\mapsto (a\alpha,b\overline{\alpha}).$$

Now let $\psi_4$ be the isomorphism induced by the isomorphisms above. Hence $$\Psi=\psi_4\psi_3\psi_2\psi_1\colon\mathbf{e}(T(Q, \mathcal{M})\otimes_{\xr}\xc)\mathbf{e}\longrightarrow\xc \Gamma$$ is an isomorphism we want.
\end{proof}

\begin{rem}\label{cresp-path}
	Let $\Psi$ be the isomorphism constructed in the proof of Proposition \ref{morita path alg}. For each path $p$ in $Q$, $$\Psi( \mathbf{e}(\mathcal{M}(p)\otimes\xc)\mathbf{e})=\oplus_{i=1}^{n}\xc q_i,$$ where $q_1,\dots,q_n$ are all the paths in $\Gamma$ which are fibers of $p$.
\end{rem}

By Gabriel's theorem, each finite-dimensional real algebra $A$ is Morita equivalent to $T(Q,\mathcal{M})/I$ for some modulated quiver $(Q,\mathcal{M})$ and some admissible ideal $I$. We call $T(Q,\mathcal{M})/I$ a \textbf{modulated quiver presentation} of $A$. Each finite-dimensional complex algebra $A$ is Morita equivalent to $\xc Q/I$ for some quiver $Q$ and admissible ideal $I$, and we call $\xc Q/I$ a \textbf{quiver presentation} of $A$; see \cite{B} for details. In this paper, $\mathcal{M}(\alpha)$ is always a simple bimodule for each arrow $\alpha$. Such treatment is convenient for us and makes no essential difference from the usual setting in \cite{B}.

For each admissible ideal $I$ of $T(Q,\mathcal{M})$, set $$J:=\Psi(\mathbf{e}(I\otimes_{\mathbb{R}}\xc)\mathbf{e})\subset\xc \Gamma.$$ Then $J$ is an admissible ideal of $\xc \Gamma$ by the above remark. Hence we have 

\begin{thm}\label{morita eq}
The complexified algebra $T(Q, \mathcal{M})/I\otimes_{\mathbb{R}}\xc$ is Morita equivalent to $\xc \Gamma/J$. 
\end{thm}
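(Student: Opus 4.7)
The plan is to bootstrap the Morita equivalence already provided by Proposition~\ref{morita path alg} and the fullness of the idempotent $\mathbf{e}$. Since $\mathbf{e}$ is full in $T(Q,\mathcal{M})\otimes_{\xr}\xc$, it remains full modulo $I\otimes_{\xr}\xc$ (fullness is preserved under surjective algebra homomorphisms, as the two-sided ideal generated by the image of $\mathbf{e}$ is the image of the two-sided ideal generated by $\mathbf{e}$). Therefore $T(Q,\mathcal{M})/I\otimes_{\xr}\xc$ is Morita equivalent to $\bar{\mathbf{e}}\bigl(T(Q,\mathcal{M})/I\otimes_{\xr}\xc\bigr)\bar{\mathbf{e}}$, where $\bar{\mathbf{e}}$ denotes the image of $\mathbf{e}$ in the quotient.

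Next, I would identify this corner algebra with a quotient of $\mathbf{e}\bigl(T(Q,\mathcal{M})\otimes_{\xr}\xc\bigr)\mathbf{e}$. Applying the exact functor $\mathbf{e}(-)\mathbf{e}$ to the short exact sequence
$$0\longrightarrow I\otimes_{\xr}\xc\longrightarrow T(Q,\mathcal{M})\otimes_{\xr}\xc\longrightarrow T(Q,\mathcal{M})/I\otimes_{\xr}\xc\longrightarrow 0$$
yields a canonical algebra isomorphism
$$\bar{\mathbf{e}}\bigl(T(Q,\mathcal{M})/I\otimes_{\xr}\xc\bigr)\bar{\mathbf{e}}\;\simeq\;\mathbf{e}\bigl(T(Q,\mathcal{M})\otimes_{\xr}\xc\bigr)\mathbf{e}\,\Big/\,\mathbf{e}(I\otimes_{\xr}\xc)\mathbf{e}.$$
Transporting this identification along the isomorphism $\Psi$ from Proposition~\ref{morita path alg} sends the ideal $\mathbf{e}(I\otimes_{\xr}\xc)\mathbf{e}$ precisely to $J$ by the definition $J=\Psi(\mathbf{e}(I\otimes_{\xr}\xc)\mathbf{e})$, and hence induces an isomorphism of $\xc$-algebras
$$\bar{\mathbf{e}}\bigl(T(Q,\mathcal{M})/I\otimes_{\xr}\xc\bigr)\bar{\mathbf{e}}\;\simeq\;\xc\Gamma/J.$$
Combining this with the Morita equivalence from the first step gives the desired conclusion.

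The only delicate point is justifying that $J$ is an admissible ideal of $\xc\Gamma$, which is asserted just before the theorem. For this I would note that $I$ is admissible in $T(Q,\mathcal{M})$, so it sits between $\mathrm{rad}^{N}$ and $\mathrm{rad}^{2}$ of the arrow ideal for some $N$; since $\Psi$ preserves the grading by path length (by Remark~\ref{cresp-path}, the summand $\Psi(\mathbf{e}(\mathcal{M}(p)\otimes\xc)\mathbf{e})$ is spanned by paths in $\Gamma$ lying over $p$, hence of the same length as $p$), the same sandwich bound holds for $J$ inside $\xc\Gamma$. No serious obstacle is expected beyond organizing these bookkeeping steps, since the heavy lifting was done in Proposition~\ref{morita path alg}.
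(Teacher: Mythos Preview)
Your proposal is correct and matches the paper's approach: the paper presents Theorem~\ref{morita eq} as an immediate consequence of the fullness of $\mathbf{e}$ together with Proposition~\ref{morita path alg} (the sentence ``Hence we have'' is all the proof it gives), and you have simply spelled out those routine details---passing the full idempotent to the quotient, identifying the corner of the quotient with the quotient of the corner, and transporting along $\Psi$. Your remark on admissibility via the length grading (Remark~\ref{cresp-path}) is also in line with what the paper asserts without proof just before the theorem.
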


\begin{rem}
	The quiver $\Gamma$ is independent with the choice of idempotents and isomorphisms. From now on, we always fix the idempotent $\mathbf{e}$, the isomorphism $\Psi$, and then the admissible ideal $J$. We call $\xc\Gamma/J$ the \textbf{complexified quiver presentation} of $T(Q,\mathcal{M})/I$.
\end{rem}

The admissible ideal $J$ is not easy to describe without computation. Let's see an example below.  
\begin{exm}\label{aft-cpl-npth}
	Let $$Q=\xymatrix@R=2ex{ i\ar[r]^{\alpha}& j \ar@(ul,ur)^{\beta}\ar[r]^{\gamma} & k}, \mbox{ and } \mathcal{M}(i)=\mathcal{M}(j)=\mathcal{M}(k)=\xh.$$ Then $\mathcal{M}(\alpha)=\mathcal{M}(\beta)=\mathcal{M}(\gamma)=\xh$ and $\Gamma=Q$. 
	
	Take $1_{\mathcal{M}(\alpha)}\in\mathcal{M}(\alpha)$, $1_{\mathcal{M}(\beta)}\in\mathcal{M}(\beta)$, $1_{\mathcal{M}(\gamma)}\in\mathcal{M}(\gamma)$, $\lambda\in\xh$ and set $$I=\langle 1_{\mathcal{M}(\gamma)}\otimes 1_{\mathcal{M}(\alpha)}+\lambda(1_{\mathcal{M}(\gamma)}\otimes 1_{\mathcal{M}(\beta)}\otimes 1_{\mathcal{M}(\alpha)}), 1_{\mathcal{M}(\beta)}\otimes 1_{\mathcal{M}(\beta)}\rangle\subset T(Q,\mathcal{M}).$$
	
	{\rm 1)}. When $\lambda\in\xr=\{[\begin{smallmatrix}a&0\\0&a
	\end{smallmatrix}]\,|\,a\in\xr\}\subset\xh$, $$I=\{h(1_{\mathcal{M}(\gamma)}\otimes 1_{\mathcal{M}(\alpha)}+\lambda(1_{\mathcal{M}(\gamma)}\otimes 1_{\mathcal{M}(\beta)}\otimes 1_{\mathcal{M}(\alpha)}))\,|\,h\in\xh\}\oplus\left\langle \mathcal{M}(\beta\beta)\right\rangle.$$ Applying the isomorphism $\Psi$ in Proposition \ref{morita path alg}, we have \begin{align*}
	&\Psi(\mathbf{e}((1_{\mathcal{M}(\gamma)}\otimes 1_{\mathcal{M}(\beta)}\otimes 1_{\mathcal{M}(\alpha)})\otimes 1_{\xc})\mathbf{e})\\=
	&\psi_4\psi_3(\begin{bmatrix}1&0\\0&0\end{bmatrix}\psi_2((1_{\mathcal{M}(\gamma)}\otimes 1_{\xc})\otimes(1_{\mathcal{M}(\beta)}\otimes 1_{\xc})\otimes(1_{\mathcal{M}(\alpha)}\otimes 1_{\xc}))\begin{bmatrix}1&0\\0&0\end{bmatrix})\\=
	&\psi_4\psi_3( \begin{bmatrix}1&0\\0&0\end{bmatrix}(\begin{bmatrix}1&0\\0&1\end{bmatrix}\otimes\begin{bmatrix}1&0\\0&1\end{bmatrix}\otimes\begin{bmatrix}1&0\\0&1\end{bmatrix})\begin{bmatrix}1&0\\0&0\end{bmatrix})\\ =&\psi_4(\begin{bmatrix}1&0\\0&0\end{bmatrix}(\begin{bmatrix}1&0\\0&0\end{bmatrix}\otimes\begin{bmatrix}1&0\\0&0\end{bmatrix}\otimes\begin{bmatrix}1&0\\0&0\end{bmatrix})\begin{bmatrix}1&0\\0&0\end{bmatrix}), \mbox{since }[\begin{smallmatrix}0&1\\0&0\end{smallmatrix}][\begin{smallmatrix}1&0\\0&0\end{smallmatrix}]=0
	\\=&\gamma\beta\alpha,
	\end{align*} 
	and $\Psi(\mathbf{e}((1_{\mathcal{M}(\gamma)}\otimes 1_{\mathcal{M}(\alpha)})\otimes 1)\mathbf{e})=\gamma\alpha$.
	Hence $$\Psi(\mathbf{e}((1_{\mathcal{M}(\gamma)}\otimes 1_{\mathcal{M}(\alpha)}+\lambda(1_{\mathcal{M}(\gamma)}\otimes 1_{\mathcal{M}(\beta)}\otimes 1_{\mathcal{M}(\alpha)}))\otimes 1)\mathbf{e})=\gamma\alpha+\lambda(\gamma\beta\alpha).$$ By Remark \ref{cresp-path}, $\Psi( \mathbf{e}(\mathcal{M}(\beta\beta)\otimes\xc)\mathbf{e})=\xc\beta\beta.$ Therefore $J=\langle \gamma\alpha+\lambda\gamma\beta\alpha,\beta\beta\rangle.$
	
	{\rm 2)}. When $\lambda\notin\xr$, there is a $\mu$ in $\xh$ such that $\lambda\mu\neq\mu\lambda$. Therefore \begin{align*}
	&1_{\mathcal{M}(\gamma)}\otimes 1_{\mathcal{M}(\beta)}\otimes 1_{\mathcal{M}(\alpha)}\\=&(\lambda\mu+\mu\lambda)^{-1}((1_{\mathcal{M}(\gamma)}\otimes 1_{\mathcal{M}(\alpha)}+\lambda(1_{\mathcal{M}(\gamma)}\otimes 1_{\mathcal{M}(\beta)}\otimes 1_{\mathcal{M}(\alpha)}))\mu\\&-\mu(1_{\mathcal{M}(\gamma)}\otimes 1_{\mathcal{M}(\alpha)}+\lambda(1_{\mathcal{M}(\gamma)}\otimes 1_{\mathcal{M}(\beta)}\otimes 1_{\mathcal{M}(\alpha)})))\in I.
	\end{align*} Then \begin{align*}
	I&=\langle 1_{\mathcal{M}(\gamma)}\otimes 1_{\mathcal{M}(\alpha)}, 1_{\mathcal{M}(\gamma)}\otimes 1_{\mathcal{M}(\beta)}\otimes 1_{\mathcal{M}(\alpha)}, 1_{\mathcal{M}(\beta)}\otimes 1_{\mathcal{M}(\beta)}\rangle\\&=\mathcal{M}(\gamma\alpha)\oplus\mathcal{M}(\gamma\beta\alpha)\oplus\left\langle \mathcal{M}(\beta\beta)\right\rangle.
	\end{align*} By Remark \ref{cresp-path},
	$J=\langle \gamma\alpha,\gamma\beta\alpha,\beta\beta\rangle.$
\end{exm}

We can describe $J$ in following cases.

\begin{lem}\label{rtc}
	Let $\xc\Gamma/J$ be the complexified quiver presentation of $T(Q,\mathcal{M})/I$ and $p$ be a path of $Q$.\\
	{\rm 1)}. If $\mathcal{M}$ is v-uniform with $\xr$ or $\xh$, then $p$ in $\Gamma$ is the only fiber of $p$ in $Q$. Moreover, $\mathcal{M}(p)\subset I$ if and only if $p\in J$. \\
	{\rm 2)}. If $\mathcal{M}$ is v-uniform with $\xc$, then there are exactly two paths $p'$ and $p''$ in $\Gamma$ being fibers of $p$ in $Q$. Moreover, $\mathcal{M}(p)\subset I$ if and only if $p'\in J$ if and only if $p''\in J$.
\end{lem}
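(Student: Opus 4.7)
The plan is to split the two assertions by the v-uniform division ring $D\in\{\xr,\xh,\xc\}$. The fiber counts follow immediately from the construction of $\Gamma$ in Section~3.2: when $D=\xr$ or $\xh$, every vertex and every arrow of $Q$ contributes exactly one object to $\Gamma$, so $\Gamma=Q$ and $p$ is its own unique fiber; when $D=\xc$, every $i\in Q_0$ contributes two vertices $i,\overline{i}$, and the choice of starting vertex in $\Gamma_0$ determines the lift of $p$ uniquely, giving exactly two fibers $p',p''$. The forward direction of the ideal equivalence is uniform in all three cases: if $p\in R_I$ then $1_{\mathcal{M}(p)}\in I$, and since $\mathcal{M}(p)$ is a simple $D$-$D$-bimodule, the intersection $I\cap\mathcal{M}(p)$ is either $0$ or $\mathcal{M}(p)$, forcing $\mathcal{M}(p)\subset I$. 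Hence $\mathcal{M}(p)\otimes_{\xr}\xc\subset I\otimes_{\xr}\xc$, and by Remark~\ref{cresp-path} the image $\Psi(\mathbf{e}(\mathcal{M}(p)\otimes\xc)\mathbf{e})=\bigoplus_i\xc q_i$ (sum over the fibers $q_i$ of $p$) lies in $J$.

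For the converse when $D=\xr$, Lemma~\ref{pathalg} identifies $T(Q,\mathcal{M})\cong\xr Q$ (so $R_I$ corresponds to $I$), $\mathbf{e}$ is the identity, and $\Psi$ is the canonical isomorphism $\xr Q\otimes\xc\cong\xc Q=\xc\Gamma$. Thus $J=I\otimes_{\xr}\xc$, and $p\in J$ forces $p\in I$ by decomposing $I\otimes\xc=I\oplus iI$ as $\xr$-vector spaces. When $D=\xh$, Lemma~\ref{pathalg} gives $T(Q,\mathcal{M})\cong\xh Q$, and the standard isomorphism $\xh\otimes_{\xr}\xc\cong M_2(\xc)$ produces $T(Q,\mathcal{M})\otimes_{\xr}\xc\cong M_2(\xc Q)$, under which $\mathbf{e}$ corresponds to the matrix unit $E_{11}$. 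By Morita theory, every two-sided ideal of $M_2(\xc Q)$ has the form $M_2(L)$ for a unique ideal $L\subset\xc Q$, and sandwiching by $E_{11}$ recovers $L$; applied here, $I\otimes\xc=M_2(J)$. Since $p\in\xh Q$ corresponds to $p\cdot\mathbf{I}_2\in M_2(\xc Q)$ under the embedding $\xh Q\hookrightarrow\xh Q\otimes\xc$, we conclude $p\in J\iff p\cdot\mathbf{I}_2\in I\otimes\xc\iff p\in I\iff p\in R_I$.

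When $D=\xc$, $\mathbf{e}$ is again the identity and $\Psi\colon T(Q,\mathcal{M})\otimes_{\xr}\xc\to\xc\Gamma$ is an isomorphism of $\xc$-algebras. The tool for the converse is the $\xr$-algebra automorphism $\sigma(x\otimes c)=x\otimes\overline{c}$; since $\sigma$ fixes $I$ pointwise it preserves $I\otimes\xc$, hence $\Psi\sigma\Psi^{-1}$ preserves $J$. A direct computation using the isomorphisms of Subsections~2.3 and 2.12 and tracking $\sigma$ through the maps $\psi_1,\psi_2,\psi_3,\psi_4$ defining $\Psi$ in Proposition~\ref{morita path alg} shows that $\Psi\sigma\Psi^{-1}$ acts on $\xc\Gamma$ by conjugating every complex coefficient and applying $\tau$ to every path. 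Consequently $\Psi\sigma\Psi^{-1}(p')=p''$, giving $p'\in J\iff p''\in J$. With both $p',p''\in J$, $\xc$-linearity yields $\xc p'\oplus\xc p''=\Psi(\mathcal{M}(p)\otimes\xc)\subset J$, hence $\mathcal{M}(p)\otimes\xc\subset I\otimes\xc$, and separating real and imaginary parts in $I\otimes\xc=I\oplus iI$ forces $\mathcal{M}(p)\subset I$, i.e., $p\in R_I$. The main obstacle is precisely this converse: $p'\in J$ alone only exhibits an element of $I\otimes\xc$ whose $p'$-component is nonzero, and since $I$ need not be path-homogeneous (see Example~\ref{aft-cpl-npth}) such an element may couple paths of different lengths; the $\sigma$-trick automatically supplies the partner fiber $p''$, after which the two together span $\Psi(\mathcal{M}(p)\otimes\xc)$ and the desired bimodule inclusion follows.
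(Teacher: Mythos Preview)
Your proof is correct, but for the converse direction you work much harder than necessary, and the paper's route is worth knowing. You already observed in your forward argument that $\mathcal{M}(p)$ is a simple $D$-$D$-bimodule, so $I\cap\mathcal{M}(p)\in\{0,\mathcal{M}(p)\}$. The paper simply reuses this dichotomy for the converse: if any single fiber $q$ of $p$ lies in $J$, then $\Psi^{-1}(q)$ is a nonzero element of $\mathbf{e}\bigl((I\otimes_{\xr}\xc)\cap(\mathcal{M}(p)\otimes_{\xr}\xc)\bigr)\mathbf{e}=\mathbf{e}\bigl((I\cap\mathcal{M}(p))\otimes_{\xr}\xc\bigr)\mathbf{e}$ (using flatness of $\xc$ over $\xr$), so $I\cap\mathcal{M}(p)\neq 0$, hence $\mathcal{M}(p)\subset I$ and $p\in R_I$. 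This handles all three cases uniformly in two lines and, in case~2), immediately gives the full triple equivalence $p\in R_I\iff p'\in J\iff p''\in J$ without needing to pass through ``both $p',p''\in J$''.

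By contrast, you treat the three values of $D$ separately: the $I\oplus iI$ split for $\xr$, Morita theory of $M_2(\xc Q)$ for $\xh$, and the Galois conjugation $\sigma(x\otimes c)=x\otimes\bar c$ for $\xc$. These arguments are all valid and have independent interest---in particular, your computation that $\Psi\sigma\Psi^{-1}$ acts on $\xc\Gamma$ as coefficient conjugation composed with $\tau$ is the natural Galois-descent picture and explains structurally why $\tau$ is the right involution. But for the bare statement of the lemma, the bimodule-simplicity argument you already set up makes the case split and the $\sigma$-trick unnecessary.
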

\begin{proof}
	1) The first statement follows from the construction of $\Gamma$. By Remark \ref{cresp-path}, we have $\mathcal{M}(p)\subset I\Leftrightarrow \xc p\subset J\Leftrightarrow p\in J$. 
	
	2) The first statement follows from the construction of $\Gamma$. By Remark \ref{cresp-path}, we have $\mathcal{M}(p)\subset I\Leftrightarrow\xc p'\oplus\xc p''\subset J\Leftrightarrow p',p''\in J.$ On the other hand, we assume that $p'\in J$ or $p''\in J$. By Remark \ref{cresp-path}, there is a nonzero element in $\mathcal{M}(p)$ belonging to $I$. Since $\mathcal{M}(p)$ is a simple bimodule generated by any nonzero element, $\mathcal{M}(p)\subset I$.
\end{proof}

\section{classification of derived-discrete real algebras}
 
\subsection{Gentle one-cycle without clock condition}
Since the tensor algebra of a modulated quiver is generally not a path algebra, there is usually no definition for gentle algebras in these cases. To our purpose, we only need to generalize the definition slightly. We use notions in Section 3.1.

\begin{defn}
	Let $(Q,R)$ be a bound quiver. A vertex $v\in Q_0$ is called \textbf{gentle} if it satisfies the following conditions.\\
	1). The number of arrows starting at $v$ is at most two and so is the number of arrows terminating at $v$.\\
	2). For any $\alpha, \beta, \gamma \in Q_1$ such that $\beta\neq\gamma$ and $t(\beta)=t(\gamma)=s(\alpha)=v$, exactly one of $\alpha\beta$ and $\alpha\gamma$ belongs to $R$.\\
	3). For any $\alpha, \beta, \gamma \in Q_1$ such that $\beta\neq\gamma$ and $s(\beta)=s(\gamma)=t(\alpha)=v$, exactly one of $\beta\alpha$ and $\gamma\alpha$ belongs to $R$.
\end{defn}

We recall some definitions from \cite{V}. A bound quiver $(Q,R)$ is called \textbf{gentle}, if $R$ is a set consisting of paths with length two and all the vertices of $Q$ are gentle in $(Q,R)$. A connected quiver $Q$ is called \textbf{one-cycle} if it contains exactly one (unoriented) cycle. Further, a gentle one-cycle bound quiver $(Q,R)$ is called \textbf{without clock condition} if in the cycle, the number of clockwise-oriented paths in $R$ is different from that of counterclockwise-oriented paths in $R$.

\begin{defn}
	A connected $\xr$-algebra is called \textbf{gentle one-cycle without clock condition}, if it admits a modulated quiver presentation $T(Q,\mathcal{M})/I$ such that \\
	1). the modulation $\mathcal{M}$ is v-uniform,\\
	2). there is a gentle one-cycle without clock condition bound quiver $(Q, R)$ such that $I=\langle R\rangle_{\mathcal{M}}$ (see notations after Lemma \ref{pathalg}).
\end{defn}

\begin{rem}
	1) A connected $\xc$-algebra is called gentle one-cycle without clock condition if it admits a quiver presentation $\xc\Gamma/\langle J\rangle$ such that the bound quiver $(\Gamma,J)$ is gentle one-cycle without clock condition, see \cite{V}.\\
	2) By Lemma \ref{pathalg}, sometimes $T(Q,\mathcal{M})$ is isomorphic to a path algebra for a modulated quiver $(Q,\mathcal{M})$. In these cases, we assume that $D$ is $\xr$, $\xh$ or $\xc$ and $(Q,R)$ is a gentle one-cycle without clock condition bound quiver. Then the $\xr$-algebra $DQ/\left\langle R\right\rangle \simeq\xr Q/\left\langle R\right\rangle\otimes_{\xr}D$ is gentle one-cycle without clock condition.\\
	3) Not all the gentle one-cycle without clock condition real algebras are isomorphic to some quotients of path algebras; see the example below.
\end{rem}

\begin{exm}\label{notpathalg}
	Let $$Q=\makecell{\xymatrix{i\ar@(ul,ur)^{\alpha}}}\mbox{ , }\mathcal{M}(i)=\xc\mbox{, }\mathcal{M}(\alpha)=\overline{\xc}\mbox{, }I=\left\langle\mathcal{M}(\alpha\alpha)\right\rangle.$$ Then $T(Q,\mathcal{M})/I$ is gentle one-cycle without clock condition. If $T(Q,\mathcal{M})/I\simeq \xc Q'/I'$ for some $Q'$ and $I'$, then the complexified quivers $\Gamma$ and $\Gamma'$ of $Q$ and $Q'$ respectively should be the same. But $\Gamma={\xymatrix{i\ar@/^/[r]^{\alpha}& \overline{i}\ar@/^/[l]^{\overline{\alpha}}}}$ while $\Gamma'=Q'\sqcup Q'$.
\end{exm}

\subsection{The classification of real derived-discrete algebras}
Now we can state our main theorem and leave the proof to Sections 5 and 6.
\begin{thm}\label{main}
	Let $\xc\Gamma/J$ be the complexified quiver presentation of $T(Q, \mathcal{M})/I$. Then $T(Q, \mathcal{M})/I$ is gentle one-cycle without clock condition if and only if each connected component of $\xc\Gamma/J$ is gentle one-cycle without clock condition.
\end{thm}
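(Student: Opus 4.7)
The plan is to prove the two implications separately, exploiting the explicit control on $J$ afforded by Lemma~\ref{rtc} whenever $\mathcal{M}$ is v-uniform.

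For the "only if" direction, suppose $\mathcal{M}$ is v-uniform with some $D \in \{\xr,\xh,\xc\}$ and $(Q,R_I)$ is gentle one-cycle without clock condition. I would split by $D$. When $D=\xr$ or $D=\xh$, the construction in Section~3.2 gives $\Gamma=Q$ and Lemma~\ref{rtc}(1) gives $J=\langle R_I\rangle$, so $(\Gamma,J)$ is literally $(Q,R_I)$ and we are done. When $D=\xc$, vertices of $\Gamma$ come in pairs $\{i,\overline{i}\}$ and each arrow $\alpha$ of $Q$ lifts to two arrows of $\Gamma$, depending on whether $\mathcal{M}(\alpha)=\xc$ or $\overline{\xc}$. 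Using Lemma~\ref{rtc}(2), which tells us that both fibers of a path in $R_I$ belong to $J$, I would verify the three axioms vertex by vertex: gentleness lifts because each vertex of $\Gamma$ has the same local arrow count and the same zero/nonzero dichotomy as its image in $Q$; each connected component of $\Gamma$ is either a single copy or two disjoint copies of $Q$ (controlled by the parity of $\overline{\xc}$-arrows traversed around the cycle, as in Example~3.8); and the clock count on each component coincides with that of $Q$.

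For the "if" direction, suppose each connected component of $\xc\Gamma/J$ is gentle one-cycle without clock condition. I would proceed in two stages, following the outline given in the introduction. Stage one is to tame the non-uniqueness of $J$: as Example~\ref{aft-cpl-npth} shows, an admissible ideal of $T(Q,\mathcal{M})$ can have generators mixing non-commuting elements of $\xh$, so $J$ need not be monomial a priori. I would invoke Proposition~\ref{special} to reduce to the case where $J$ is monomial, making the identification with $R_I$ through Lemma~\ref{rtc} meaningful. Stage two, the main technical obstacle, is to force $\mathcal{M}$ to be v-uniform on vertices. If some arrow $\alpha\colon i\to j$ in $Q$ has $\mathcal{M}(i)\ne\mathcal{M}(j)$, the fiber construction of Section~3.2 produces a configuration of arrows in $\Gamma$ that either exceeds the two-in/two-out bound at some vertex of $\Gamma$, forces a relation pattern incompatible with the gentle zero/nonzero dichotomy, or creates an extra cycle violating the one-cycle assumption. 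I would rule out each of these possibilities by a case analysis across the nine pairs $(\mathcal{M}(i),\mathcal{M}(j))$, using the lemmas promised for Section~6.

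Once v-uniformity is established, a second application of Lemma~\ref{rtc} identifies $R_I$ with the set of paths of $Q$ whose fibers lie in $J$, and the transfer of properties in the direction $\Gamma\to Q$ is routine: gentleness at $v\in Q_0$ follows from gentleness at any fiber $\widetilde v\in\Gamma_0$; the one-cycle property of $Q$ follows from $\pi\colon\Gamma\to Q$ being at most two-to-one, so a cycle downstairs lifts to one or two cycles upstairs; and the no-clock-condition count transfers through the bijection between paths of length two in the cycle of $Q$ and their fibers. The hardest single step is the v-uniformity argument in Stage two, since without it the bridge between the combinatorial data of $(\Gamma,J)$ and $(Q,R_I)$ collapses.
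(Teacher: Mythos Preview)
Your ``only if'' sketch is essentially the paper's argument (Section~5), modulo the normalisation step Lemma~\ref{oneornon} that the paper uses to reduce the $\xc$-uniform case to at most one $\overline{\xc}$-arrow on the cycle. The ``if'' direction, however, has two genuine gaps.

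First, Proposition~\ref{special} does not let you \emph{reduce} to monomial $J$; it only describes $J$ when it is not monomial, namely $J=\langle \gamma\beta-\lambda\gamma\alpha_{n-1}\cdots\alpha_0\beta,p_2,\dots,p_r\rangle$ with $\lambda\in\xc^*$. The paper then has to work at the level of $I$, not $J$: after establishing v-uniformity (via Remark~\ref{gentle point} and Lemma~\ref{tri mod}) it passes to a path algebra, shows that $I$ contains a relation $\gamma\beta-\mu\gamma\alpha_{n-1}\cdots\alpha_0\beta$ with $\mu$ lying in $\xr$ (this is exactly the subtlety of Example~\ref{aft-cpl-npth}: a non-real $\lambda\in\xh$ would force $I$ to be monomial already), and finally builds an explicit $\xr$-algebra isomorphism $T(Q,\mathcal{M})/I'\to T(Q,\mathcal{M})/I$ with $I'=\langle\gamma\beta,p_2,\dots,p_r\rangle$. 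Your plan skips this entire passage from $J$ back to $I$.

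Second, a purely local arrow-by-arrow obstruction check cannot force v-uniformity. Take $Q$ the path $u\to v\to w$ with $\mathcal{M}(u)=\mathcal{M}(w)=\xr$ and $\mathcal{M}(v)=\xc$: then $\Gamma$ is a $4$-cycle in which every vertex meets exactly two arrows, so none of your three obstructions (two-in/two-out excess, incompatible gentle dichotomy, extra cycle) fires. The paper's route is global: Lemma~\ref{tr-or-cy} first shows $Q$ is a tree or one-cycle; in the one-cycle case Lemma~\ref{tri mod} shows a modulation change on $Q^\circ$ would put two cycles in one component of $\Gamma$; and the tree case is eliminated by the symmetry Lemma~\ref{sym}, which says that length-two relations on the $\Gamma$-cycle through a $\xc$-vertex occur in $\tau$-pairs, so the clockwise and counterclockwise counts agree, contradicting the clock condition. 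That last step uses the no-clock-condition hypothesis in an essential way, and your local analysis never invokes it.
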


Recall that a $k$-algebra $A$ is called \textbf{derived-discrete} if given any cohomology dimension vector over $k$, $A$ admits only finitely many objects in its bounded derived category up to isomorphisms. By \cite{V}, a connected $\xc$-algebra is derived-discrete if and only if it is either piecewise hereditary of Dynkin type or gentle one-cycle without clock condition. We prove the classification theorem for derived-discrete $\xr$-algebras.

\begin{thm}\label{cls}
	A connected $\xr$-algebra is derived-discrete if and only if it is either piecewise hereditary of Dynkin type or gentle one-cycle without clock condition.
\end{thm}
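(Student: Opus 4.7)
The plan is to reduce the real classification to the complex one of Vossieck by combining the complexification theorem (Theorem~\ref{morita eq}), the descent/ascent for derived-discreteness and piecewise heredity along separable extensions (Theorem~4.1 in \cite{L}), and Theorem~A, glued together by a short Galois-connectedness argument.

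By Gabriel's theorem it suffices to take a modulated-quiver presentation $A=T(Q,\mathcal{M})/I$. Theorem~\ref{morita eq} identifies $A\otimes_{\xr}\xc$ with $\xc\Gamma/J$ up to Morita equivalence, and derived-discreteness is Morita-invariant. By \cite{L}, $A$ is derived-discrete iff $A\otimes_{\xr}\xc$ is, and a not-necessarily-connected $\xc$-algebra is derived-discrete iff each of its blocks is; so by Vossieck~\cite{V}, $A$ is derived-discrete iff every connected component of $\xc\Gamma/J$ is either piecewise hereditary of Dynkin type or gentle one-cycle without clock condition.

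For the ``if'' direction I would argue as follows. If $A$ is piecewise hereditary of Dynkin type, then so is $A\otimes_{\xr}\xc$ by \cite{L}, hence derived-discrete, hence $A$ is derived-discrete. If $A$ is gentle one-cycle without clock condition, Theorem~A gives that every connected component of $\xc\Gamma/J$ is of the same type, so each component is derived-discrete by Vossieck, and again $A$ is derived-discrete.

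For the ``only if'' direction, assume $A$ is connected and derived-discrete. Then every component of $\xc\Gamma/J$ falls into one of Vossieck's two classes, and the key point is that they all fall into the \emph{same} class. The Galois group $\mathrm{Gal}(\xc/\xr)\cong\mathbb{Z}_2$ acts on $A\otimes_{\xr}\xc$ through the second tensor factor with fixed subalgebra $A$; any Galois-invariant central idempotent of $A\otimes_{\xr}\xc$ descends to $A$, so the connectedness of $A$ forces the Galois action to be transitive on the primitive central idempotents of $A\otimes_{\xr}\xc$, i.e.\ on the connected components of $\xc\Gamma/J$. Because being piecewise hereditary of Dynkin type and being gentle one-cycle without clock condition are both invariants of a $\xc$-algebra, transitivity forces a single class. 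In the Dynkin case, $A\otimes_{\xr}\xc$ is piecewise hereditary of Dynkin type, and \cite{L} descends this to $A$; in the gentle case, Theorem~A descends the property to $A$. The main obstacle is precisely the ``mixed'' scenario where some components of $\xc\Gamma/J$ are Dynkin and others gentle; once the Galois transitivity above is in hand this evaporates, and the remainder is assembly of cited results.
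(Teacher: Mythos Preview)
Your argument is correct and follows the same broad reduction as the paper (pass to $A\otimes_{\xr}\xc$ via \cite{L}, invoke Vossieck, and use Theorem~A), but the bridge you build between the complex and real classifications is genuinely different from the paper's.

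The paper does not argue via Galois transitivity on blocks. Instead it proves directly that $A$ is piecewise hereditary of Dynkin type if and only if $A\otimes_{\xr}\xc$ is, by identifying this property with being \emph{derived finite} (finitely many indecomposables in $\mathbf{D}^{\mathrm b}$ up to shift) via \cite{Z}, and then showing derived finiteness ascends and descends along the split separable extension $\xr\subset\xc$ by the usual direct-summand argument. With that biconditional in hand, the dichotomy follows: if $A$ is derived-discrete but not piecewise hereditary of Dynkin type, then $A\otimes_{\xr}\xc$ is derived-discrete but not derived finite, so no component is of Dynkin type, hence by Vossieck every component is gentle one-cycle without clock condition, and Theorem~A applies. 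Your Galois argument achieves the same exclusion of the mixed case by a different mechanism, and in fact makes that exclusion more explicit than the paper does.

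Two small points to tighten. First, you invoke \cite{L} for descent of ``piecewise hereditary \emph{of Dynkin type}''. As stated in the introduction, \cite{L} gives compatibility of piecewise heredity (and of derived-discreteness), not the Dynkin refinement; the paper supplies that refinement through the derived-finite characterization of \cite{Z}. Your argument is easily repaired by the same device, or simply by noting that derived finiteness descends along $\xr\subset\xc$. Second, the nontrivial element of $\mathrm{Gal}(\xc/\xr)$ acts on $A\otimes_{\xr}\xc$ only $\xr$-linearly, so the two blocks it interchanges are \emph{conjugate} rather than isomorphic as $\xc$-algebras. Your conclusion still holds because both ``piecewise hereditary of Dynkin type'' and ``gentle one-cycle without clock condition'' are invariant under complex conjugation of the coefficient field (the defining data are combinatorial), but the phrase ``invariants of a $\xc$-algebra'' should be sharpened to reflect this.
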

\begin{proof}
	By \cite{L}, an $\xr$-algebra $A$ is derived-discrete if and only if so is $A\otimes_{\xr}\xc$. We claim that $A$ is piecewise hereditary of Dynkin type if and only if so is $A\otimes_{\xr}\xc$. Then we can use the classification of derived-discrete $\xc$-algebras and Theorem \ref{main} to prove our conclusion. 
	
	Proof of the claim. Recall that an algebra $A$ is called derived finite if up to shifts and isomorphisms there are only finitely many indecomposable objects in the bounded derived category $\dba$. By \cite[Theorem 2.3]{Z}, an algebra is piecewise hereditary of Dynkin type if and only if it is derived finite. 
	So we only need to prove that $A$ is derived finite if and only if so is $A\otimes_{\xr}\xc$. 
	
	We use an argument for split and separable algebra extensions; see \cite{L}. For the `if' part. If $A\otimes_{\xr}\xc$ is derived finite, let $\{M_1,,M_2,\dots,M_m\}$ be all the indecomposable objects up to shifts and isomorphisms in $\mathbf{D}^b(A\otimes_{\xr}\xc\mbox{-mod})$. Each $M_i$ can be viewed as an object in $\dba$. For each indecomposable object $X$ in $\dba$, $X$ is a direct summand of $X\otimes_{\xr}\xc$ in $\dba$. Thus there is an $M_i$ such that up to shifts and isomorphisms, $M_i$ is a direct summand of $X\otimes_{\xr}\xc$ in $\mathbf{D}^b(A\otimes_{\xr}\xc\mbox{-mod})$ and $X$ is a direct summand of $M_i$ in $\mathbf{D}^b(A\mbox{-mod})$. Therefore, up to shifts and isomorphisms, all the indecomposable direct summands of $M_i$, $i\in\{1,\dots,m\}$, in $\dba$ contains all the indecomposable objects, which means that $A$ is derived finite. The `only if' part can be proved with a similar argument. 
\end{proof}

\section{The `only if' part}

\begin{lem}\label{vuniformtogentle}
	Let $(Q,\mathcal{M})$ be a v-uniform modulated quiver and $\xc\Gamma/J$ the complexified quiver presentation of $T(Q,\mathcal{M})/I$, where $I=\langle R\rangle_{\mathcal{M}}$. Then a vertex $v$ is gentle in $(Q,R)$ if and only if each fiber of $v$ is gentle in $(\Gamma,J)$.
\end{lem}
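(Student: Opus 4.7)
The plan is to check each of the three gentleness conditions separately, exploiting v-uniformity of $\mathcal{M}$ to give a rigid description of how arrows incident to $v$ in $Q$ correspond to arrows incident to fibers of $v$ in $\Gamma$, and then appealing to Lemma~\ref{rtc} to translate length-two relations.

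First, for each fiber $v'\in\Gamma_0$ of $v$, I plan to show that the projection $\pi\colon\Gamma\to Q$ restricts to bijections
\[
\{\alpha'\in\Gamma_1\mid s(\alpha')=v'\}\longrightarrow v^+\quad\text{and}\quad\{\beta'\in\Gamma_1\mid t(\beta')=v'\}\longrightarrow v^-.
\]
When $\mathcal{M}$ is v-uniform with $\xr$ or $\xh$, the equality $\Gamma=Q$ makes this trivial. When $\mathcal{M}$ is v-uniform with $\xc$, the vertex $v$ has the two fibers $v,\bar v$; items~(5) and~(6) of the construction of $\Gamma$ in Section~3.2 imply that, regardless of whether $\mathcal{M}(\alpha)=\xc$ or $\bar{\xc}$, each arrow $\alpha$ at $v$ in $Q$ has precisely one fiber starting (resp.\ terminating) at each of $v$ and $\bar v$. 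This immediately transfers condition~(1) of gentleness between $v$ in $(Q,R_I)$ and each fiber of $v$ in $(\Gamma,J)$.

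Next, for a length-two path $\alpha\beta$ in $Q$ with $t(\beta)=s(\alpha)=v$ and a chosen fiber $v'$ of $v$, the bijections above single out unique fibers $\alpha',\beta'$ of $\alpha,\beta$ with $t(\beta')=s(\alpha')=v'$, and their composition $\alpha'\beta'$ is a fiber of $\alpha\beta$. By Lemma~\ref{rtc}, $\alpha\beta\in R_I$ if and only if $\alpha'\beta'\in J$. Thus, for distinct $\beta,\gamma\in v^-$ and an arrow $\alpha\in v^+$, the statement ``exactly one of $\alpha\beta,\alpha\gamma$ lies in $R_I$'' is equivalent to ``exactly one of $\alpha'\beta',\alpha'\gamma'$ lies in $J$'', which is precisely condition~(2) of gentleness at $v'$ in $(\Gamma,J)$. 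The dual argument (swapping source and target) handles condition~(3).

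The argument is essentially combinatorial bookkeeping rather than conceptually deep. The main thing to watch is the case $\mathcal{M}(v)=\xc$: one has to check case by case that the fibering $\alpha\mapsto\alpha'$ lands on arrows incident to the prescribed fiber $v'$ rather than to the opposite fiber, irrespective of whether $\mathcal{M}(\alpha)=\xc$ or $\bar{\xc}$, and that the length-two paths through $v'$ exhaust the fibers of $\alpha\beta$ over $v$. Both facts follow directly from inspecting items~(5) and~(6) of the construction of $\Gamma$.
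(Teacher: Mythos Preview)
Your proposal is correct and is precisely the case-by-case verification the paper alludes to: the paper's own proof is the single sentence ``using the construction of $\Gamma$ and Lemma~\ref{rtc}, the gentle conditions can be checked case by case,'' and you have simply written out those cases explicitly. No alternative ideas are involved.
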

\begin{proof}
	We only need to consider the arrows connected with $v$. Using the construction of $\Gamma$ and Lemma \ref{rtc}, the gentle conditions can be checked case by case. 
\end{proof}

Recall that a \textbf{chain} in a quiver $Q$ is an unoriented path in the underlying graph of $Q$ such that each arrow of $Q$ occurs at most once. We usually denote a chain with length $n-1$ by $v_1-v_2-\cdots -v_n$, where $v_1,\dots, v_n$ are vertices in $Q$ or by $c_1c_2\cdots c_{n-1}$, where $c_1,\dots, c_{n-1}$ are arrows or inverse arrows in $Q$. We say that a chain $c'_1c'_2\cdots c'_m$ in $\Gamma$ is a fiber of a chain $c_1c_2\cdots c_m$ in $Q$ if for each $i$, $c'_i$ is a fiber of $c_i$ and being arrows or inverse arrows simultaneously.  If two graphs $G_1$ and $G_2$ are adjacent (connected by a chain with length one), we denote them by $G_1-G_2$. The lemma below follows immediately from the construction of $\Gamma$.

\begin{lem}\label{fiber of chain}
	Let $(Q,\mathcal{M})$ be a modulated quiver and $c$ a chain in $Q$ from vertex $u$ to vertex $w$. \\
	{\rm 1)}. If there is a vertex $v$ in $c$ with $\mathcal{M}(v)=\xr$ or $\xh$, then in $\Gamma$, each fiber of $c$ contains $v$ and there is a fiber of $c$ from $u$ to $w$.\\
	{\rm 2)}. If $\mathcal{M}(v)=\xc$ for each vertex $v$ in $c$ and the number of arrows with modulation $\overline{\xc}$ in $c$ is even, then in $\Gamma$, there are exactly two fibers of $c$. One is from $u$ to $w$ and the other is from $\overline{u}$ to $\overline{w}$. \\
	{\rm 3)}. If $\mathcal{M}(v)=\xc$ for each vertex $v$ in $c$ and the number of arrows with modulation $\overline{\xc}$ in $c$ is odd, then in $\Gamma$, there are exactly two fibers of $c$. One is from $\overline{u}$ to $w$ and the other is from $u$ to $\overline{w}$.
\end{lem}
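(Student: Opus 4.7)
The plan is to unpack the definition of a fiber directly from the construction of $\Gamma$ in Section~3.2, reading off how each arrow lifts and how the two-element fiber $\{v,\overline{v}\}$ of a $\xc$-vertex interacts with the lifts of each incident arrow.

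The key observation, extracted from items (5) and (6) in the definition of $\Gamma_1$, is the following \emph{propagation principle}: for an arrow $\alpha\colon i\to j$ with $\mathcal{M}(i)=\mathcal{M}(j)=\xc$, the two lifts of $\alpha$ to $\Gamma$ are $\alpha\colon i\to j$ and $\overline{\alpha}\colon\overline{i}\to\overline{j}$ when $\mathcal{M}(\alpha)=\xc$, and $\alpha\colon\overline{i}\to j$ and $\overline{\alpha}\colon i\to\overline{j}$ when $\mathcal{M}(\alpha)=\overline{\xc}$. Thus a $\xc$-arrow preserves the bar-state at both endpoints, while an $\overline{\xc}$-arrow swaps it. For parts~2 and 3 I walk along $c=v_0-v_1-\cdots-v_n$: choosing a bar-state at $v_0=u$ (two choices), the propagation principle uniquely determines the bar-state at every subsequent $v_i$, and the state at $v_n=w$ equals the initial state plus the number of $\overline{\xc}$-arrows modulo $2$. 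This gives exactly two fibers, matching the prescribed endpoints in each case.

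For part~1, the first assertion is immediate: a vertex $v$ with $\mathcal{M}(v)\in\{\xr,\xh\}$ has only one fiber in $\Gamma_0$, namely $v$ itself, so any fiber of $c$ must contain $v$. For the existence of a fiber from $u$ to $w$, I decompose $c$ at the fixed vertices into maximal $\xc$-only subchains and note that every arrow incident to a non-$\xc$ vertex has two lifts in $\Gamma$ (items (2), (3), (4) in the arrow construction), joining each of the two fibers of the adjacent $\xc$-vertex to the unique fiber of the non-$\xc$ vertex. Hence at each fixed vertex the bar-state on each $\xc$-neighbour can be chosen freely and independently. Starting by fixing the first vertex of the fiber to be $u$ itself (a genuine choice only when $\mathcal{M}(u)=\xc$), I propagate through the first $\xc$-only subchain to the first fixed vertex, reset the bar-state on the far side, propagate again, and iterate; at the last fixed vertex the reset is chosen so that the propagation terminates at $w$ itself.

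No substantial obstacle is expected beyond notational bookkeeping. The only subtle point is that chains are unoriented, so an arrow $\alpha$ may be traversed in either direction; but the propagation principle is symmetric in the two endpoints of $\alpha$, so traversal direction is immaterial and the entire argument reduces to a direct case-check against the construction of $\Gamma$.
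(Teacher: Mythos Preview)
Your proposal is correct and follows precisely the approach the paper takes: the paper simply asserts that ``the lemma below follows immediately from the construction of $\Gamma$'' and gives no further proof, so your argument is an explicit unpacking of exactly that claim, reading off the lifts of arrows from items (1)--(6) in Section~3.2 and tracking the bar-state along the chain. The propagation principle you isolate for parts~2 and~3, together with the free reset at non-$\xc$ vertices for part~1, is the full content of the lemma.
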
 

Recall from Lemma \ref{change mod} that if $(Q,\mathcal{M})$ is a modulated quiver such that $\mathcal{M}$ is v-uniform, then for certain vertices, we can change the modulation on arrows connected to them. We can do more in our situation.
\begin{lem}\label{oneornon}
	Let $(Q,\mathcal{M})$ be a modulated quiver such that $Q$ is one-cycle with the cycle $Q^{\circ}$ and $\mathcal{M}$ is v-uniform with $\xc$. Then we have a modulated quiver $(Q,\mathcal{M}')$ and an isomorphism of algebras $T(Q,\mathcal{M})\simeq T(Q,\mathcal{M}')$ satisfying that:\\
	{\rm 1)}. If the number of arrows in $\{\alpha\in Q^{\circ}_1\;|\;\mathcal{M}(\alpha)=\overline{\xc}\}$ is even, then  $$\mathcal{M'}(\alpha)=\xc, \forall\alpha\in Q_1.$$
	{\rm 2)}. If the number of arrows in $\{\alpha\in Q^{\circ}_1\;|\;\mathcal{M}(\alpha)=\overline{\xc}\}$ is odd, then  $$\mathcal{M'}(\alpha)=\begin{cases}
	\overline{\xc}& \mbox{if } \alpha=\beta\\\xc &\mbox{if }\alpha\neq\beta
	\end{cases}, \forall \alpha\in Q_1, \mbox{ for some arrow } \beta \mbox{ in } Q^\circ.$$

\end{lem}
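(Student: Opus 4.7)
The plan is to interpret Lemma~\ref{change mod} as a $\mathbb{Z}/2$-valued vertex switching. Writing $\xc=0$ and $\overline{\xc}=1$, the change of modulation at a loop-free vertex $v$ flips exactly the labels on arrows in $v^+\cup v^-$; I will iterate such single-vertex changes and compose the resulting algebra isomorphisms. The decisive observation is a parity invariant: any loop-free vertex $v$ is incident to either $0$ or $2$ arrows of $Q^\circ$, according as $v\notin Q^\circ$ or $v\in Q^\circ$, so every allowed switching preserves the parity of $|\{\alpha\in Q^\circ_1 : \mathcal{M}(\alpha)=\overline{\xc}\}|$. This parity is exactly what distinguishes the two cases of the lemma.

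The engine is the following tree fact, which I will prove by induction on the number of edges: if $T$ is a finite tree with a distinguished vertex $r$, then switchings at vertices of $T$ other than $r$ act transitively on the $\mathbb{Z}/2$-labelings of the edges of $T$. For the inductive step, any tree with at least two vertices has at least two leaves (the endpoints of a longest path), so one may choose a leaf $\ell\neq r$; the induction hypothesis applied to $T\setminus\{\ell\}$ (with the same distinguished vertex $r$) achieves the target labeling on all edges except the unique edge $e$ incident to $\ell$, after which a single switching at $\ell$ flips only $e$ and corrects its label if necessary. I will then split according to the structure of $Q^\circ$. If $Q^\circ$ has length at least two, then $Q$ contains no loops; for any chosen $\beta\in Q^\circ_1$ the complement $Q\setminus\{\beta\}$ is a connected tree, and applying the tree fact (with no restricted vertex) I normalize every arrow other than $\beta$ to carry $\xc$, while the parity invariant forces $\mathcal{M}'(\beta)$ to be $\xc$ in the even case and $\overline{\xc}$ in the odd case. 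If $Q^\circ$ is a loop $\beta$ at a vertex $v_\ell$, no legal switching (i.e.\ at a vertex $\neq v_\ell$) touches $\beta$, so $\mathcal{M}(\beta)$ is an absolute invariant equal to the parity; the complement $Q\setminus\{\beta\}$ is a connected tree to which the tree fact applies with $r=v_\ell$, normalizing all its arrows to $\xc$.

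The main technical care is needed in the loop case, where one must use the restricted version of the tree fact (switchings only at vertices distinct from $v_\ell$) and where the modulation of the loop is read off directly from the parity without any direct intervention. Once these pieces are in place, composing the single-vertex isomorphisms furnished by Lemma~\ref{change mod} along the chosen sequence of switchings produces the desired isomorphism $T(Q,\mathcal{M})\simeq T(Q,\mathcal{M}')$, which by construction sends $\mathcal{M}(p)$ to $\mathcal{M}'(p)$ for every path $p$.
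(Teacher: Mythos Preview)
Your proof is correct. Both your argument and the paper's iterate Lemma~\ref{change mod}, but the organization differs. The paper proceeds in two explicit phases: first it walks along the cycle, locating maximal chains in $Q^\circ$ whose two end-arrows carry $\overline{\xc}$ and whose interior arrows carry $\xc$, and applies Lemma~\ref{change mod} at the interior vertices to cancel two $\overline{\xc}$'s at a time; then it cleans up the hanging trees by a breadth-first sweep outward from $Q^\circ$, flipping at each new vertex whose incoming edge still carries $\overline{\xc}$. Your version instead packages the whole computation as a $\mathbb{Z}/2$ vertex-switching problem, isolates the parity of $\overline{\xc}$-arrows on $Q^\circ$ as the unique invariant, and reduces everything to a single transitivity statement for switchings on a pointed tree. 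What your approach buys is a clean structural explanation of why exactly the parity survives and nothing else; what the paper's buys is an explicit normalization algorithm that never appeals to an auxiliary lemma about group actions. One small point of presentation: when you invoke the tree fact ``with no restricted vertex'' in the non-loop case, you are implicitly using that the pointed version already implies the unpointed one (or, equivalently, that switching at all vertices of a connected graph is trivial); it would be worth saying this in one line.
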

\begin{proof}
    We first modify the modulation on the cycle. If $Q^\circ$ is a loop, then we set $\mathcal{M'}=\mathcal{M}$ and take the identity map as an isomorphism. Now assume that the length of $Q^\circ$ is larger than one. If there is a chain $v_1-v_2-\cdots-v_n$, $n\geq 3$, in $Q^\circ$ such that $\mathcal{M}$ maps arrows to $\xc$ except the first and last ones, then we apply Lemma~\ref{change mod} on $v_2,\dots,v_{n-1}$ so that we get a new modulation which maps all arrows in the chain to $\xc$. This process reduces the number of arrows in $Q^\circ$ with modulation $\overline{\xc}$. Repeat this process till there is no such a chain. Then we obtain a modulation such that the number of arrows on $Q^\circ$ with modulation $\overline{\xc}$ is one or zero.

    For the modulation outside the cycle. If there is an arrow $\alpha$ not in $Q^\circ$ such that $\mathcal{M}(\alpha)=\overline{\xc}$, then we can apply Lemma~\ref{change mod} on the endpoint of $\alpha$ which is farther to $Q^\circ$. So we get a new modulation which maps $\alpha$ to $\xc$. Since $Q$ is one-cycle, we can get $\mathcal{M'}$ by repeating this process on arrows from near to far away from $Q^\circ$.
    
    The isomorphism of algebras is the composition of isomorphisms in Lemma \ref{change mod} according to our modification processes.
\end{proof}
\begin{rem}
	Let $\phi$ be the isomorphism in the above lemma. By Lemma \ref{change mod}, $\phi(\mathcal{M}(p))=\mathcal{M'}(p)$ for each path $p$ in $Q$. Hence $T(Q,\mathcal{M})/I$ is gentle one-cycle without clock condition if and only if so is $T(Q,\mathcal{M}')/\phi(I)$.
\end{rem}

\begin{proof}[\textbf{Proof of the `only if' part of Theorem \ref{main}}]
	
	If $\mathcal{M}$ is v-uniform with $\xr$ or $\xh$, then $\Gamma=Q$ and $\xc\Gamma/J$ is gentle one-cycle without clock condition by Lemma \ref{rtc} and Lemma \ref{vuniformtogentle}. 
	
	If $\mathcal{M}$ is v-uniform with $\xc$, by Lemma \ref{oneornon}, we only need to consider two cases: there is no arrow or only one arrow in the cycle with modulation $\overline{\xc}$. By Lemma~\ref{vuniformtogentle}, we only need to prove that each component of $\xc\Gamma/J$ is one cycle and without clock condition. 
	
	In the first case, $\Gamma=Q\sqcup Q$ and each connected component of $\xc\Gamma/J$ is one-cycle without clock condition by Lemma \ref{rtc}. In the second case, by Lemma \ref{fiber of chain}, two fibers of the cycle in $Q$ form the only cycle in $\Gamma$. To see that it is without clock condition, we use the following facts. The fibers of paths having uniform orientation in the cycle of $Q$ also have uniform orientation in the cycle of $\Gamma$ (by construction of $\Gamma$) and each path in $I$ has exactly two fibers in $J$ (by Lemma \ref{fiber of chain}). Hence the difference of numbers of clockwise and counterclockwise relations in the cycle of $\Gamma$ doubles that of $Q$. 
\end{proof}

\section{The `if' part}
\subsection{The possible forms of $\xc\Gamma/J$}The algebra $\xc\Gamma/J$ is gentle one-cycle without clock condition doesn't mean that $J$ is monomial (i.e. can be generated by paths). To prove the theorem, we describe all the possible forms of $J$. 

Denote by $P_i$ (\textit{resp.} $P'_i$) and $S_i$ (\textit{resp.} $S'_i$) the corresponding projective and simple $\xc\Gamma/J$ (\textit{resp.} $\xc\Gamma/J'$) modules for each $i\in\Gamma_0$. An algebra isomorphism $\xc \Gamma/J\simeq\xc \Gamma/J'$ induces an isomorphism between module categories and projectives. Hence it induces uniquely a permutation $\sigma$ on $\Gamma_0$ such that for any $i,j$ in $\Gamma_0$, $$\mathrm{Hom}_{\xc\Gamma/J'}(P'_i,P'_j)\simeq\mathrm{Hom}_{\xc\Gamma/J}(P_{\sigma(i)},P_{\sigma(j)}),$$ $$\mathrm{Ext}^1_{\xc\Gamma/J'}(S'_i,S'_j)\simeq\mathrm{Ext}^1_{\xc\Gamma/J}(S_{\sigma(i)},S_{\sigma(j)}).$$ The last isomorphism implies that there is an automorphism on $\Gamma_1$ compatible with $\sigma$. Hence $\sigma$ induces a quiver automorphism on $\Gamma$ and then a $\xc$-algebra automorphism on $\xc\Gamma$, which are also denoted by $\sigma$.

\begin{prop}\label{special}
	Assume that we have an algebra isomorphism $\xc \Gamma/J\simeq\xc \Gamma/J'$ such that $J'=\langle p_1,p_2,\dots,p_r\rangle$ and $(\Gamma,\{p_1,\dots,p_r\})$ is gentle one-cycle without clock condition, and that the automorphism on $\Gamma_0$ induced by the isomorphism as above is the identity map. Then the following statements hold.\\
	{\rm a)}. If $J$ is monomial, then $J=\langle p_1,\dots,p_r\rangle$. \\
	{\rm b)}. If $J$ is not monomial, then $\Gamma$ contains the following quiver 
	$$\Omega:\makecell{\xymatrix@R=2ex{v_1 \ar[r]^{\alpha_1} & v_2 \ar@{.}[r] & v_{n-1} \ar[dl]^{\alpha_{n-1}}\\
		u \ar[r]_\beta& v_0 \ar[ul]^{\alpha_0}\ar[r]_\gamma& w
	}}$$ as a subquiver, where $n\geq 1$, $\alpha_{n-1}\cdots\alpha_1\alpha_0\notin J'$. Moreover, we have $\gamma\beta\in J'$ {\rm (}set $\gamma\beta=p_1${\rm )} and there is a nonzero $\lambda\in\xc$ such that $J=\langle \gamma\beta-\lambda\gamma\alpha_{n-1}\cdots\alpha_0\beta,p_2,\dots,p_r\rangle$.
\end{prop}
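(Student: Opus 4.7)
The plan is to first lift the algebra isomorphism $\phi\colon\xc\Gamma/J\to\xc\Gamma/J'$ to an algebra homomorphism $\tilde\phi\colon\xc\Gamma\to\xc\Gamma$. Since the induced quiver automorphism of $\Gamma$ is the identity, one may choose $\tilde\phi$ to satisfy $\tilde\phi(e_i)=e_i$ for all $i\in\Gamma_0$ and $\tilde\phi(\alpha)=c_\alpha\alpha+r_\alpha$ for each arrow $\alpha$, with $c_\alpha\in\xc^{\times}$ and $r_\alpha$ a linear combination of paths of length $\geq 2$ from $s(\alpha)$ to $t(\alpha)$. The induced map on $\mathrm{rad}/\mathrm{rad}^{2}$ is invertible, so (using admissibility to reduce to a finite-dimensional situation) $\tilde\phi$ is an automorphism of $\xc\Gamma$ with $\tilde\phi(J')=J$. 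Rescaling the generators, one may write
\[
J=\langle p_1+h_1,\;\ldots,\;p_r+h_r\rangle,
\]
where each $h_i$ is a $\xc$-linear combination of paths from $s(p_i)$ to $t(p_i)$ of length $\geq 3$; after absorbing paths divisible by some $p_j$ into the other generators, one may further assume every path occurring in $h_i$ is nonzero in $\xc\Gamma/J'$.

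For Part 1, a monomial ideal of $\xc\Gamma$ is homogeneous with respect to the path-length grading and hence coincides with its initial ideal with respect to the radical filtration. Passing to associated gradeds, $\phi$ induces a graded isomorphism between $\xc\Gamma/\mathrm{in}(J)$ and $\xc\Gamma/\mathrm{in}(J')$; in degree one it acts by scaling each arrow $\alpha$ by $c_\alpha$, an automorphism of $\xc\Gamma$ fixing every monomial ideal set-wise. Since $J'$ is already homogeneous (generated in degree two), one concludes $\mathrm{in}(J)=J'$. If $J$ is monomial, then $J=\mathrm{in}(J)=J'$ as required.

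For Part 2, suppose some $h_i$ is nonzero after the above reduction. The key combinatorial step is to classify, in a gentle one-cycle without clock condition bound quiver, the paths of length $\geq 3$ from $s(p_i)$ to $t(p_i)$ that are nonzero in $\xc\Gamma/J'$. Write $p_i=\gamma\beta$ with $\beta\colon u\to v_0$ and $\gamma\colon v_0\to w$. Since $\Gamma$ has a unique unoriented cycle and a tree-like complement, any such path $\pi$ must begin with $\beta$, end with $\gamma$, and traverse the cycle; working through the gentle conditions at $v_0$ forces the in-arrows there to be $\{\beta,\alpha_{n-1}\}$ and the out-arrows to be $\{\gamma,\alpha_0\}$, with $\gamma\beta\in J'$ and necessarily $\alpha_0\alpha_{n-1}\in J'$ (otherwise $\alpha_0\beta\in J'$ would make $\pi$ vanish). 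This produces precisely the subquiver $\Omega$ and identifies the only allowable long path as $\pi=\gamma\alpha_{n-1}\cdots\alpha_0\beta$, because any further loop around the cycle inserts the relation $\alpha_0\alpha_{n-1}$ and so is zero. Thus $h_i=\lambda\pi$ for some $\lambda\in\xc^{\times}$. For $j\neq i$, a parallel analysis shows there is no allowable path of length $\geq 3$ between $s(p_j)$ and $t(p_j)$ (such a path would demand a second independent cycle), so after a further unipotent change of generators $h_j=0$. Relabelling $i$ as $1$ yields $J$ in the stated form.

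The main obstacle is the combinatorial step in Part 2: one must exhaustively verify, using the gentle axioms at each vertex of $\Gamma$ together with the one-cycle and without clock condition constraints, that no alternative configuration of cycles and length-two relations admits an allowable path of length $\geq 3$ between $s(p_i)$ and $t(p_i)$, and to check that the unipotent simplifications leave exactly one non-monomial generator. I expect this classification of allowable long paths, rather than the lifting or the Part 1 argument, to absorb the bulk of the work.
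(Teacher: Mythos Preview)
Your approach is correct and genuinely different from the paper's. The paper never lifts the isomorphism to $\mathrm{Aut}(\xc\Gamma)$; instead it works entirely with the module-categorical data, using that the identity permutation on vertices gives $\dim\mathrm{Hom}_{\xc\Gamma/J}(P_i,P_j)=\dim\mathrm{Hom}_{\xc\Gamma/J'}(P'_i,P'_j)$ (and similarly for compositions through $\mathrm{rad}\,\mathrm{End}$). For each generator $\gamma\beta\in J'$ it then runs a case analysis on how the path $\gamma\beta$ meets the cycle $\Gamma^\circ$, showing in every case but one that the relevant Hom-space or composition vanishes, hence $\gamma\beta\in J$. The single exceptional case is exactly your $\Omega$-configuration. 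For Part~1 the paper still has to argue separately, via $\mathrm{Hom}(P_w,P_{v_0})\circ\mathrm{rad}\,\mathrm{End}(P_{v_0})\circ\mathrm{Hom}(P_{v_0},P_u)$, that $\gamma\beta\in J$ even in this case when $J$ is monomial; your associated-graded/initial-ideal argument handles Part~1 uniformly and more cleanly (note it uses, implicitly, that a gentle one-cycle \emph{without clock condition} quiver has no parallel arrows, so the degree-one part of $\mathrm{gr}(\phi)$ really is diagonal). For Part~2 the two routes converge: your ``classification of allowable long paths from $u$ to $w$ nonzero modulo $J'$'' is exactly the computation of $\mathrm{Hom}_{\xc\Gamma/J'}(P'_w,P'_u)$ that the paper carries out case by case, and both arguments hinge on the same combinatorial fact that the unique such path is $\gamma\alpha_{n-1}\cdots\alpha_0\beta$ and that the $\Omega$-configuration can occur at only one vertex of the cycle (since $\alpha_0\alpha_{n-1}\in J'$ kills any second instance). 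Your framework front-loads the lifting step but then gives a tidier Part~1; the paper avoids the lift at the cost of a longer Part~1.
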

    To prove this proposition, we give some observations first.

    By assumption, we have $$\mathrm{Hom}_{\xc\Gamma/J'}(P'_i,P'_j)\simeq\mathrm{Hom}_{\xc\Gamma/J}(P_i,P_j),\forall i,j\in \Gamma_0.$$

    For each path $i\overset{\beta}{\longrightarrow}j\overset{\gamma}{\longrightarrow}k$ in $\Gamma$ such that $\gamma\beta\in J'$, we prove that $\gamma\beta\in J$ in the following cases. Assumptions of these cases are strengthened progressively until the situation of $\Gamma$ in b) occurs. Denote the cycle in $\Gamma$ by $\Gamma^\circ$.
    
    \medskip
	(\textbf{\uppercase\expandafter{\romannumeral1}}). If $i=k$, then $\gamma\beta\in J$.\\
	1). If $i=k=j$, then $\beta: i\rightarrow j(=i)$ is the cycle of $\Gamma$. Since $\Gamma$ is one-cycle, there is no other path except $\beta^n$, $n\geq 1$, from $i$ to $i$. Since $\gamma\beta=\beta^2\in J'$, we have $\dim_{\xc}\mathrm{End}_{\xc\Gamma/J}(P_i)=\dim_{\xc}\mathrm{End}_{\xc\Gamma/J'}(P'_i)=2.$ Therefore $\gamma\beta\in J$. \\	
	2). If $i=k\neq j$, then $i\rightarrow j\rightarrow i$ is the cycle of $\Gamma$. We have $\gamma\beta\in J$ by using a similar argument as above. 
	
	\medskip	
	If $i\neq k$, then $i\neq j$ and $j\neq k$, otherwise $(\Gamma,\{p_1,\dots,p_r\})$ contains a cycle without relations on it. Hence we assume that $i,j,k$ are pairwise different in the following cases. Given two sets of morphisms $M$ and $N$ in a module category, set $M\circ N:=\{f\circ g\,|\,f\in M,g\in N\}.$ To prove that $\gamma\beta\in J$, it is sufficient to prove that $$\mathrm{Hom}_{\xc\Gamma/J'}(P'_j,P'_i)\circ\mathrm{Hom}_{\xc\Gamma/J'}(P'_k,P'_j)=0\mbox{ or }\mathrm{Hom}_{\xc\Gamma/J'}(P'_k,P'_i)=0.$$ Denote by $\Gamma^\circ\cap\gamma\beta$ a subset of $\{i,j,k\}$ consisting of vertices which belong to $\Gamma^\circ$.
	
	(\textbf{\uppercase\expandafter{\romannumeral2}}). If $i,j,k$ are pairwise different and $\Gamma^\circ\cap\gamma\beta\neq\{j\}$, then $\gamma\beta\in J$.	\\
	1). If $\Gamma^\circ\cap\gamma\beta=\emptyset$, then there is no other path from $i$ to $k$ since $\Gamma$ is one-cycle. Therefore $\mathrm{Hom}_{\xc\Gamma/J'}(P'_k,P'_i)=0$. \\	
	2). If $\Gamma^\circ\cap\gamma\beta=\{i\}$ or $\{k\}$, then $\beta$ and $\gamma$ are not in $\Gamma^\circ$. Each path from $i$ to $k$ contains $\gamma\beta$. Therefore $\mathrm{Hom}_{\xc\Gamma/J'}(P'_k,P'_i)=0.$\\
	3). If $\Gamma^\circ\cap\gamma\beta=\{i,j\}$ (or $\{j,k\}$), then $\beta$ (or $\gamma$) is in $\Gamma^\circ$ since $\Gamma$ is one-cycle. Because $(\Gamma,\{p_1,\dots,p_r\})$ is without clock condition, other paths from $i$ to $j$ (or $j$ to $k$) must belong to $J'$. Therefore $\mathrm{Hom}_{\xc\Gamma/J'}(P'_j,P'_i)\circ\mathrm{Hom}_{\xc\Gamma/J'}(P'_k,P'_j)=0.$\\
	4). If $\Gamma^\circ\cap\gamma\beta\supseteq\{i,k\}$ then  $\Gamma^\circ\cap\gamma\beta=\{i,j,k\}$ and $\gamma\beta$ is in $\Gamma^\circ$ since $\Gamma$ is one-cycle. Each path passing $i$, $j$ and $k$ contains $\gamma\beta$. Therefore $\mathrm{Hom}_{\xc\Gamma/J'}(P'_j,P'_i)\circ\mathrm{Hom}_{\xc\Gamma/J'}(P'_k,P'_j)=0$.
	
	\medskip
	(\textbf{\uppercase\expandafter{\romannumeral3}}). If $i,j,k$ are pairwise different, $\Gamma^\circ\cap\gamma\beta=\{j\}$ and $\Gamma^\circ$ is not an oriented cycle, then $\gamma\beta\in J$. Indeed, there is no other path from $i$ to $k$. Therefore $\mathrm{Hom}_{\xc\Gamma/J'}(P'_k,P'_i)=0$.
	
	\medskip
	Finally, assume that $i,j,k$ are pairwise different, $\Gamma^\circ\cap\gamma\beta=\{j\}$ and $\Gamma^\circ$ is an oriented cycle. Then $\Gamma$ contains $\Omega$ as a subquiver and $i=u$, $j=v_0$, $k=w$. 
	
	(\textbf{\uppercase\expandafter{\romannumeral4}}). If further $\alpha_{n-1}\cdots\alpha_0\in J'$, then $\mathrm{Hom}_{\xc\Gamma/J'}(P'_w,P'_u)=0$. Thus $\gamma\beta\in J$.
	
	\medskip	
	In summary, if $\gamma\beta$ is in one of the above cases, then $\gamma\beta\in J$. Now we prove the proposition.

\begin{proof}
	If $\gamma\beta$ is not in cases                     (\uppercase\expandafter{\romannumeral1}), (\uppercase\expandafter{\romannumeral2}), (\uppercase\expandafter{\romannumeral3}) or (\uppercase\expandafter{\romannumeral4}), the assumption:
	\begin{center}
		$\gamma\beta:u\rightarrow v_0\rightarrow w\subseteq\Omega\subseteq\Gamma$ and $\alpha_{n-1}\cdots\alpha_0\notin J'$ (\textreferencemark),
	\end{center} must hold. In this case, $\alpha_0\alpha_{n-1}\in J'$ since $(\Gamma,\{p_1,\dots,p_r\})$ is gentle and $\gamma\beta\in J'$. By case (\uppercase\expandafter{\romannumeral1}) or case  (\uppercase\expandafter{\romannumeral2}) 4), $\alpha_0\alpha_{n-1}\in J$. Therefore $$\dim_{\xc}\mathrm{Hom}_{\xc\Gamma/J}(P_w,P_u)=\dim_{\xc}\mathrm{Hom}_{\xc\Gamma/J'}(P'_w,P'_u)=1,$$ where $\mathrm{Hom}_{\xc\Gamma/J}(P_w,P_u)$ can be spanned by $\gamma\beta+J$ and $\gamma\alpha_{n-1}\cdots\alpha_0\beta+J$ over $\xc$.

	a). Assume that $J$ is monomial. By cases (\uppercase\expandafter{\romannumeral1}), (\uppercase\expandafter{\romannumeral2}), (\uppercase\expandafter{\romannumeral3}) and (\uppercase\expandafter{\romannumeral4}), each path $p_i\in J'$ not satisfying the assumption (\textreferencemark) belongs to $J$. If $p_i(=\gamma\beta)\in J'$ satisfies the assumption (\textreferencemark), exactly one of $\gamma\beta$ and $\gamma\alpha_{n-1}\cdots\alpha_0\beta$ belongs to $J$ because $J$ is monomial. Since $(\Gamma,\{p_1,\dots,p_r\})$ is gentle, $\gamma\alpha_{n-1},\alpha_0\beta\notin J'$. Hence $\gamma\alpha_{n-1}\cdots\alpha_0\beta\notin J'$. So \begin{align*}
	{}&\mathrm{Hom}_{\xc\Gamma/J}(P_{w},P_{v_0})\circ\mathrm{rad}(\mathrm{End}_{\xc\Gamma/J}(P_{v_0}))\circ\mathrm{Hom}_{\xc\Gamma/J}(P_{v_0},P_{u})\\
	\simeq&\mathrm{Hom}_{\xc\Gamma/J'}(P'_w,P'_{v_0})\circ\mathrm{rad}(\mathrm{End}_{\xc\Gamma/J'}(P'_{v_0}))\circ\mathrm{Hom}_{\xc\Gamma/J'}(P'_{v_0},P'_u)\neq0.
	\end{align*} Therefore $\gamma\alpha_{n-1}\cdots\alpha_0\beta\notin J$ and $\gamma\beta\in J$. 
	
	Hence $\langle p_1,\dots,p_r\rangle\subseteq J$. By comparing the $\xc$-dimension of algebras, we have $$J=\langle p_1,\dots,p_r\rangle.$$
	
	b). Assume that $J$ is not monomial. Then there is one $p_i$ satisfying the assumption (\textreferencemark), otherwise each $p_i$ belongs to $J$ by (\uppercase\expandafter{\romannumeral1}), (\uppercase\expandafter{\romannumeral2}), (\uppercase\expandafter{\romannumeral3}) and (\uppercase\expandafter{\romannumeral4}) and $J$ is monomial. Set $\gamma\beta=p_1$ satisfying the assumption (\textreferencemark) without loss of generality. Then there is a nonzero $\lambda\in\xc$ such that $\gamma\beta-\lambda(\gamma\alpha_{n-1}\cdots\alpha_0\beta)\in J$. Other $p_i$ will not satisfy (\textreferencemark) since  $(\Gamma,\{p_1,\dots,p_r\})$ is gentle one-cycle and $\alpha_0\alpha_{n-1}\in J'$. Hence $$J'':=\langle \gamma\beta-\lambda(\gamma\alpha_{n-1}\cdots\alpha_0\beta),p_2,\dots,p_r\rangle\subseteq J.$$ There is an isomorphism of algebras $\theta\colon\xc \Gamma/J'\rightarrow\xc \Gamma/J''$ induced by
	$$\forall i\in\Gamma_0, \theta(e_i)=e_i;$$
	$$\forall\alpha\in\Gamma_1, \theta(\alpha)=\begin{cases}
	\alpha & \text{if } \alpha\neq\gamma\\
	\gamma-\lambda\gamma\alpha_{n-1}\cdots\alpha_0 & \text{if } \alpha=\gamma
	\end{cases}.$$
	So $J=J''=\langle \gamma\beta-\lambda(\gamma\alpha_{n-1}\cdots\alpha_0\beta),p_2,\dots,p_r\rangle.$
\end{proof}
\begin{rem}\label{gentle point}
	In the above proposition, if b) holds, then all the vertices except $v_0$ in $(\Gamma,\{\gamma\beta-\lambda(\gamma\alpha_{n-1}\cdots\alpha_0\beta),p_2,\dots,p_r\})$ are gentle.
\end{rem}

\subsection{Necessary conditions on $(Q,\mathcal{M})$} To prove the 'if' part of the main theorem, we should prove that $\mathcal{M}$ is v-uniform. For this purpose, following lemmas are needed.

\begin{lem}\label{tr-or-cy}
	Let $(Q,\mathcal{M})$ be a connected modulated quiver and $\xc\Gamma$ be the complexified quiver presentation of $T(Q, \mathcal{M})$. Assume that each connected component of $\Gamma$ is one-cycle. Then the following statements hold.\\
	{\rm 1)}. The quiver $Q$ is a tree or one-cycle. \\
	{\rm 2)}. If $Q$ is a tree, then $Q$ contains either a chain $u-v$ with $\mathcal{M}(u)=\xr$ and $\mathcal{M}(v)=\xh$ or a chain $v_1-v_2-\cdots-v_n$, $n>2$ such that $\mathcal{M}(v_1)\neq\xc$, $\mathcal{M}(v_i)=\xc$ for $i\in\{2,3\dots,n-1\}$, and $\mathcal{M}(v_n)\neq\xc$. 
\end{lem}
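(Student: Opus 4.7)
The plan is to reduce both claims to an Euler-characteristic comparison between $Q$ and $\Gamma$. Let $n_\xr$, $n_\xh$, $n_\xc$ count the vertices of $Q$ of each modulation type, and for $X,Y\in\{\xr,\xh,\xc\}$ let $E_{XY}$ count the arrows whose endpoints are of types $X$ and $Y$. The construction of $\Gamma$ in Section~3.2 assigns two fibers to each $\xc$-vertex and one fiber to every other vertex; meanwhile each $\xr\xr$- or $\xh\xh$-arrow of $Q$ lifts to a single arrow in $\Gamma_1$, while every other arrow lifts to two. Let $Q^{\xr\xh}$ be the subgraph of $Q$ spanned by the $\xr$- and $\xh$-vertices together with only the $\xr\xr$- and $\xh\xh$-arrows. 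A direct count on the underlying undirected graphs yields the identity
$$\chi(\Gamma) \;=\; 2\chi(Q) - \chi(Q^{\xr\xh}).$$
By hypothesis every connected component of $\Gamma$ is one-cycle, so $\chi(\Gamma)=0$, giving $\chi(Q^{\xr\xh})=2\chi(Q)$.

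For (1), I will observe that $Q^{\xr\xh}$ embeds naturally as a subgraph of $\Gamma$: its vertices are the single-fiber vertices of $\Gamma$ and its arrows are the unique fibers of the $\xr\xr$- and $\xh\xh$-arrows of $Q$. Hence every connected component of $Q^{\xr\xh}$ sits inside a connected component of $\Gamma$, which by hypothesis has first Betti number $1$. So each component of $Q^{\xr\xh}$ has $b_1\leq 1$, whence $\chi(Q^{\xr\xh})\geq 0$. Since $Q$ is connected, this forces $\chi(Q)\geq 0$, i.e.\ $b_1(Q)\leq 1$, proving (1).

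For (2), suppose $Q$ is a tree, so $\chi(Q)=1$ and therefore $\chi(Q^{\xr\xh})=2$. As a subgraph of the tree $Q$, $Q^{\xr\xh}$ is itself a forest, so its Euler characteristic equals its number of components; hence $Q^{\xr\xh}$ has exactly two components $T_1,T_2$, both consisting only of non-$\xc$ vertices. Since $Q$ is connected, at least one chain in $Q$ joins $T_1$ to $T_2$; pick one of minimal length, say $v_1-v_2-\cdots-v_n$ with $v_1\in T_1$ and $v_n\in T_2$ (necessarily $n\geq 2$). If some interior vertex had modulation $\xr$ or $\xh$ it would lie in one of $T_1,T_2$, producing a strictly shorter chain between them, contradicting minimality. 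Hence $\mathcal{M}(v_i)=\xc$ for $1<i<n$, while $\mathcal{M}(v_1),\mathcal{M}(v_n)\neq\xc$, which is the required chain.

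The only nontrivial step is the verification of the Euler-characteristic identity $\chi(\Gamma)=2\chi(Q)-\chi(Q^{\xr\xh})$, which reduces to going case-by-case through the six construction rules for $\Gamma$ and tallying the added vertices and arrows; everything else is routine graph-theoretic bookkeeping.
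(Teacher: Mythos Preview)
Your argument is correct and takes a genuinely different route from the paper. The paper proves (1) constructively: for each cycle $c$ in $Q$ it invokes Lemma~\ref{fiber of chain} to exhibit a cycle in $\Gamma$ built from fibers of $c$, splitting into three cases according to whether $c$ meets a non-$\xc$ vertex and to the parity of $\overline{\xc}$-arrows on $c$; distinct cycles of $Q$ produce distinct cycles in $\Gamma$, so the one-cycle hypothesis on $\Gamma$ bounds the number of cycles in $Q$. For (2) the paper first observes that a v-uniform tree would make $\Gamma$ a tree, then locates the desired chain by choosing a maximal subtree $T_0$ of uniform non-$\xc$ modulation and finding a non-$\xc$ vertex adjacent to an adjacent $\xc$-subtree $T_i$ but not to $T_0$. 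Your approach replaces all of this fiber bookkeeping by the single identity $\chi(\Gamma)=2\chi(Q)-\chi(Q^{\xr\xh})$, obtained from the vertex/arrow duplication table of Section~3.2, together with the observation that $Q^{\xr\xh}$ embeds in $\Gamma$. This is more economical and avoids the case analysis of Lemma~\ref{fiber of chain} entirely; the paper's version, on the other hand, yields explicit information about how the cycles of $Q$ sit inside $\Gamma$, which it reuses elsewhere (e.g.\ in Lemma~\ref{tri mod}). Your minimality argument in (2) is also cleaner than the paper's maximal-subtree construction; note that the equation $\chi(Q^{\xr\xh})=2$ already rules out the degenerate case where every vertex of $Q$ carries $\xc$.
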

\begin{proof}
	1) Given a cycle in $Q$, by Lemma \ref{fiber of chain} the subgraph formed by its fibers in $\Gamma$ contains a cycle. Since different cycles in $Q$ has different fibers in $\Gamma$, our statement holds
	
	2) If the modulation of $T(Q,\mathcal{M})$ is v-uniform and $Q$ is a tree, then $\Gamma$ is also a tree. Hence there is a chain $u-v$ in $Q$ with $\mathcal{M}(u)\neq\mathcal{M}(v)$. If $\mathcal{M}(u)=\xr$ and $\mathcal{M}(v)=\xh$, then nothing needs to be proved. Now we assume that there is no such chain. Thus there are vertices with modulation $\xc$ in $Q$.
	
	Let $T_0$ be a maximal subtree of $Q$ with uniform modulation $\xr$ or $\xh$ on vertices, and $T_1,T_2,\dots,T_m$ be maximal subtrees of $Q$ which are adjacent to $T_0$ and with uniform modulation $\xc$ on vertices. There are vertices not belonging to any $T_i$, $i=0,\dots,m$, otherwise $\Gamma$ is a tree. Among these vertices, by assumption above there is a vertex $v$ not adjacent to $T_0$ but adjacent to $T_i$ for some $i\in\{1,\dots,m\}$. Therefore $\mathcal{M}(v)\neq\xc$ and there is a chain we want in the subquiver $v-T_i-T_0$.
\end{proof}

Let $(Q,\mathcal{M})$ be a modulated quiver such that $Q$ a tree and $\xc\Gamma/J$ be the complexified quiver presentation of $T(Q, \mathcal{M})/I$. For each path $\alpha\beta\colon u\rightarrow v\rightarrow w$ in $Q$ with length two such that $\mathcal{M}(v)=\xc$, there are exactly two paths $p$ and $\tau(p)$ in $\Gamma$ which are fibers of $\alpha\beta$.
\begin{lem}\label{sym}
	Given the setting above, we have that $p\in J$ if and only if $\tau(p)\in J$.
\end{lem}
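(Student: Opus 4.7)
The proof rests on the natural Galois symmetry $\sigma\colon x\otimes c\mapsto x\otimes\overline{c}$ on $T(Q,\mathcal{M})\otimes_\xr\xc$. Since every $x\in I$ sits in $I\otimes_\xr\xc$ as $x\otimes 1$ and is thereby $\sigma$-fixed, the ideal $I\otimes_\xr\xc$ is $\sigma$-stable. Define $\tilde\tau\colon\xc\Gamma\to\xc\Gamma$ to be the $\xc$-antilinear $\xr$-algebra automorphism determined by $\tilde\tau(e_i)=e_{\tau(i)}$, $\tilde\tau(\gamma)=\tau(\gamma)$ for each vertex $i$ and arrow $\gamma$, and $\tilde\tau(c)=\overline{c}$ on scalars; this is well defined since $\tau$ is a quiver automorphism of $\Gamma$. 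The whole content is the claim $\tilde\tau(J)=J$, and from it the lemma follows at once: the path $p$ carries trivial scalar, so $\tilde\tau(p)=\tau(p)$, and hence $p\in J\Leftrightarrow\tau(p)=\tilde\tau(p)\in\tilde\tau(J)=J$.

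To prove $\tilde\tau(J)=J$ I plan to identify $\tilde\tau$, after transport by $\Psi$, with the involution induced by $\sigma$. Concretely, I would trace $\sigma$ through each of the isomorphisms of Section~2: for instance, $\xc\otimes_\xr\xc\simeq\xc\times\xc$, $a\otimes b\mapsto(ab,\overline{a}b)$, carries $\sigma$ to the map $(z_1,z_2)\mapsto(\overline{z_2},\overline{z_1})$, which swaps the two primitive idempotents of $C_\xc$; under $\Psi$ this is precisely the swap $e_v\leftrightarrow e_{\overline v}=e_{\tau(v)}$. Analogous case-by-case checks against the simple-bimodule isomorphisms in Subsections~2.4--2.12 show that on every arrow $\gamma\in\Gamma_1$ the Galois action matches $\tilde\tau$ up to the scalar conjugation built into the latter, because $\tau$ was constructed in Section~3 so as to encode exactly the decomposition $\mathcal{M}(\alpha)\otimes_\xr\xc\simeq\bigoplus C_{\mathcal{M}(t(\alpha))}q\,C_{\mathcal{M}(s(\alpha))}$ into fibers.

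The main technical obstacle is that $\mathbf{e}$ is not fixed by $\sigma$ whenever an $\xh$-vertex appears: explicitly $\sigma(e_{C_\xh})=\bigl[\begin{smallmatrix}0&0\\0&1\end{smallmatrix}\bigr]\neq e_{C_\xh}$, so $\sigma$ does not restrict directly to an automorphism of $\mathbf{e}(T(Q,\mathcal{M})\otimes_\xr\xc)\mathbf{e}$. I would handle this via the standard Morita device: a local calculation at each $\xh$-vertex shows that $\mathbf{e}$ and $\sigma(\mathbf{e})$ are equivalent idempotents (right multiplication by $\bigl[\begin{smallmatrix}0&1\\0&0\end{smallmatrix}\bigr]$ gives an explicit $M_2(\xc)$-module isomorphism between the corresponding progenerators), so one may choose $u\in\mathbf{e}(T(Q,\mathcal{M})\otimes_\xr\xc)\sigma(\mathbf{e})$ and $v\in\sigma(\mathbf{e})(T(Q,\mathcal{M})\otimes_\xr\xc)\mathbf{e}$ with $uv=\mathbf{e}$ and $vu=\sigma(\mathbf{e})$, and define $f(x):=u\sigma(x)v$. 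The resulting $f$ is an $\xr$-algebra automorphism of $\mathbf{e}(T(Q,\mathcal{M})\otimes_\xr\xc)\mathbf{e}$ preserving $\mathbf{e}(I\otimes_\xr\xc)\mathbf{e}$, so $\Psi f\Psi^{-1}$ is an $\xr$-algebra automorphism of $\xc\Gamma$ preserving $J$. Since $\Psi f\Psi^{-1}$ agrees with $\tilde\tau$ only up to an inner automorphism of $\xc\Gamma$---coming from the Morita choice of $u,v$, and in any case preserving the two-sided ideal $J$ automatically---we conclude $\tilde\tau(J)=J$, completing the argument.
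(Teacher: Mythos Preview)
Your Galois-descent strategy is conceptually the right one and is genuinely different from the paper's proof, which never introduces the involution $\tilde\tau$ at all. The paper instead splits into cases on $(\mathcal{M}(u),\mathcal{M}(w))$: when $\mathcal{M}(\alpha\beta)$ is a simple bimodule (i.e.\ $\mathcal{M}(u)\neq\mathcal{M}(w)$ or both equal $\xc$) it invokes Remark~\ref{cresp-path} directly, and in the two remaining cases $\mathcal{M}(u)=\mathcal{M}(w)=\xr$ and $\mathcal{M}(u)=\mathcal{M}(w)=\xh$ it writes out $\Psi(\textbf{e}((a\otimes b)\otimes 1)\textbf{e})$ explicitly and manipulates the coefficients by hand (in the $\xh$ case multiplying on both sides by $\bigl[\begin{smallmatrix}0&1\\-1&0\end{smallmatrix}\bigr]\in\xh$). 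Your approach, if completed, would yield the global statement $\tilde\tau(J)=J$, which is stronger and more satisfying; the paper only proves what is literally needed.

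There is, however, a real gap in your argument as written. Knowing that two choices of $(u,v)$ give $f$'s differing by an inner automorphism of $\textbf{e}(T\otimes_\xr\xc)\textbf{e}$ does \emph{not} by itself show that $\Psi f\Psi^{-1}$ equals $\tilde\tau$ up to an inner automorphism of $\xc\Gamma$; for that you still need to exhibit \emph{one} choice of $(u,v)$ for which equality holds on the nose, and that is exactly the ``case-by-case check'' you defer. Without it, all you know is that $\Psi f\Psi^{-1}$ and $\tilde\tau$ are two graded $\xc$-antilinear automorphisms agreeing on vertices, so they differ by a graded $\xc$-linear automorphism fixing each $e_i$. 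Such an automorphism rescales (and possibly mixes parallel) arrows, and rescalings are \emph{not} in general inner and do \emph{not} preserve a non-monomial ideal $J$; so the sentence ``in any case preserving the two-sided ideal $J$ automatically'' is not justified.

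There are two clean fixes. Either carry out the arrow-level check you promised (with the concrete choice $u_i=\bigl[\begin{smallmatrix}0&1\\0&0\end{smallmatrix}\bigr]$, $v_i=\bigl[\begin{smallmatrix}0&0\\1&0\end{smallmatrix}\bigr]$ at each $\xh$-vertex one finds $\Psi f\Psi^{-1}=\tilde\tau$ exactly, so no inner correction is needed). Or, more economically for this lemma, observe that $u,v$ lie in degree~$0$, hence $\Psi f\Psi^{-1}$ is graded; since $\mathcal{M}(v)=\xc$ the fibers of $\alpha$ and $\beta$ in $\Gamma$ have pairwise distinct source--target pairs, so $\Psi f\Psi^{-1}$ must send $p$ to a nonzero scalar multiple of $\tau(p)$, and the conclusion follows immediately without ever invoking the global claim $\tilde\tau(J)=J$.
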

\begin{proof}
	If $\mathcal{M}(u)\neq\mathcal{M}(w)$ or $\mathcal{M}(u)=\mathcal{M}(w)=\xc$, then $\mathcal{M}(\alpha\beta)$ is a simple bimodule which can be generated by any nonzero element. By Remark \ref{cresp-path}, $$p\in J\Leftrightarrow \mathcal{M}(\alpha\beta)\subset I\Leftrightarrow \tau(p)\in J.$$
	
	So we can assume that $\mathcal{M}(u)=\mathcal{M}(w)=\xr$ or $\xh$. In both cases, $$\Gamma=\makecell{\xymatrix@R=1ex{ & v \ar[dr]^{\alpha} & \\u\ar[ur]^{\beta}\ar[dr]_{\overline{\beta}}& &w\\ &\overline{v}\ar[ur]_{\overline{\alpha}}& }}.$$ Set $p=\alpha\beta, \tau(p)=\overline{\alpha}\overline{\beta}.$
	Since $\tau(\tau(p))=p$, we only need to prove that $$p\in J\Rightarrow\tau(p)\in J.$$
	
	1). We assume that $\mathcal{M}(u)=\mathcal{M}(w)=\xr$. For any $a\in\mathcal{M}(\alpha)=\xc$, $b\in\mathcal{M}(\beta)=\xc$, we have \begin{align*}
	&\Psi(\mathbf{e}((a\otimes b)\otimes 1_{\xc})\mathbf{e})
	=\psi_4\psi_3(1_{\mathcal{M}(\xr)}\psi_2((a\otimes 1_{\xc})\otimes(b\otimes 1_{\xc}))1_{\mathcal{M}(\xr)})\\
	=&\psi_4\psi_3((a,\overline{a})\otimes(b,\overline{b}))
	=\psi_4((a,\overline{a})\otimes(b,\overline{b}))
	=(a\alpha,\overline{a}\overline{\alpha})\otimes(b\beta,\overline{b}\overline{\beta})=abp+\overline{a}\overline{b}\tau(p).
	\end{align*} If $p\in J$, we can find $a_i\in\mathcal{M}(\alpha)$, $b_i\in\mathcal{M}(\beta)$ and $\lambda_i\in\xc^*$ such that $a_i\otimes b_i\in I$ for $i\in\{1,2,\dots,s\}$ and $$\Psi(\mathbf{e}(\sum_{i=1}^{s}(a_i\otimes b_i)\otimes\lambda_i)\mathbf{e})=\sum_{i=1}^{s}\lambda_ia_ib_ip+\lambda_i\overline{a_i}\overline{b_i}\tau(p)=p.$$ This implies that $\sum_{i=1}^{s}\lambda_ia_ib_i=1$ and $\sum_{i=1}^{s}\lambda_i\overline{a_i}\overline{b_i}=0$. Therefore $$\Psi(\mathbf{e}(\sum_{i=1}^{s}(a_i\otimes b_i)\otimes\overline{\lambda_i})\mathbf{e})=\sum_{i=1}^{s}\overline{\lambda_i}a_ib_ip+\overline{\lambda_i}\overline{a_i}\overline{b_i}\tau(p)=\tau(p).$$ Since $a_i\otimes b_i\in I$ for $i\in\{1,2,\dots,s\}$, we have $\tau(p)\in J$. 
	
	2) We assume that $\mathcal{M}(u)=\mathcal{M}(w)=\xh$. For any $\begin{bmatrix}a\\b\end{bmatrix}\in\mathcal{M}(\alpha)={_\xh\xc^2_\xc}$ and $\begin{bmatrix}c&d\end{bmatrix}\in\mathcal{M}(\beta)={_\xc\xc^2_\xh}$, we have
	\begin{align*}
	&\Psi(\mathbf{e}((\begin{bmatrix}a\\b\end{bmatrix}\otimes\begin{bmatrix}c&d\end{bmatrix})\otimes1_{\xc})\mathbf{e})\\
	=&\psi_4\psi_3(\begin{bmatrix}1&0\\0&0\end{bmatrix}\psi_2((\begin{bmatrix}a\\b\end{bmatrix}\otimes1_{\xc})\otimes(\begin{bmatrix}c&d\end{bmatrix}\otimes1_{\xc}))\begin{bmatrix}1&0\\0&0\end{bmatrix})\\
	=&\psi_4\psi_3(\begin{bmatrix}1&0\\0&0\end{bmatrix}\begin{bmatrix}a&-\overline{b}\\b&\overline{a}\end{bmatrix}\otimes\begin{bmatrix}c&d\\-\overline{d}&\overline{c}\end{bmatrix}\begin{bmatrix}1&0\\0&0\end{bmatrix})\\
	=&\psi_4(\begin{bmatrix}1&0\\0&0\end{bmatrix}\begin{bmatrix}a&-\overline{b}\\b&\overline{a}\end{bmatrix}\otimes\begin{bmatrix}c&d\\-\overline{d}&\overline{c}\end{bmatrix}\begin{bmatrix}1&0\\0&0\end{bmatrix})\\
	=&(a\alpha, -\overline{b}\overline{\alpha})\circ(c\beta,-\overline{d}\beta)=acp+\overline{b}\overline{d}\tau(p).
	\end{align*}  If $p\in J$, we can find $\begin{bmatrix}a_i\\b_i\end{bmatrix}\in\mathcal{M}(\alpha)$, $\begin{bmatrix}c_i&d_i\end{bmatrix}\in\mathcal{M}(\beta)$ and $\lambda_i\in\xc^*$ such that $\begin{bmatrix}a_i\\b_i\end{bmatrix}\otimes \begin{bmatrix}c_i&d_i\end{bmatrix}\in I$ for $i\in\{1,2,\dots,s\}$ and $$\Psi(\mathbf{e}(\sum_{i=1}^{s}(\begin{bmatrix}a_i\\b_i
	\end{bmatrix}\otimes \begin{bmatrix}c_i&d_i\end{bmatrix})\otimes\lambda_i)\mathbf{e})=\sum_{i=1}^{s}\lambda_ia_ic_ip+\lambda_i\overline{b_i}\overline{d_i}\tau(p)=p.$$ This implies that $\sum_{i=1}^{s}\lambda_ia_ic_i=1$ and $\sum_{i=1}^{s}\lambda_i\overline{b_i}\overline{d_i}=0$. Therefore 
	\begin{align*}
	&\Psi(\mathbf{e}(\sum_{i=1}^{s}(\begin{bmatrix}0&1\\-1&0\end{bmatrix}\begin{bmatrix}a_i\\b_i\end{bmatrix}\otimes\begin{bmatrix}c_i&d_i\end{bmatrix}\begin{bmatrix}0&1\\-1&0\end{bmatrix})\otimes\overline{\lambda_i})\mathbf{e})\\
	=&\Psi(\mathbf{e}(\sum_{i=1}^{s}(\begin{bmatrix}b_i\\-a_i\end{bmatrix}\otimes\begin{bmatrix}-d_i&c_i\end{bmatrix})\otimes\overline{\lambda_i})\mathbf{e})\\ =&\sum_{i=1}^{s}-\overline{\lambda_i}b_id_ip-\overline{\lambda_i}\overline{a_i}\overline{c_i}\tau(p)=-\tau(p).
	\end{align*} 
	Since $\begin{bmatrix}a_i\\b_i\end{bmatrix}\otimes\begin{bmatrix}c_i&d_i\end{bmatrix}\in I$ for $i\in\{1,2,\dots,s\}$ and $\begin{bmatrix}
	0&1\\-1&0
	\end{bmatrix}\in\mathcal{M}(w)= \xh$, we have $\tau(p)\in J$.
\end{proof}
\begin{lem}\label{motri}
	Given an $\xr$-algebra $T(Q, \mathcal{M})/I$, let $\xc\Gamma/J$ be its complexified quiver presentation and $u-v-w$ be a chain in $Q$. If $\mathcal{M}(u)=\mathcal{M}(v)=\xr$ or $\xh$ and the fiber of $v$ is gentle in $(\Gamma,J)$, then $\mathcal{M}(w)=\mathcal{M}(v)=\mathcal{M}(u)$.
\end{lem}
\begin{proof}
	1) If $u-v-w$ is a path $p$ with $\mathcal{M}(w)\neq\mathcal{M}(v)$, one can check case by case that there are exactly two paths in $\Gamma$ which are fibers of $p$ and the bimodule $\mathcal{M}(p)$ can be generated by any non-zero element. By Remark \ref{cresp-path}, fibers of $p$ must simultaneously belong or not belong to $J$. It is ridiculous since $v$ in $\Gamma$ is gentle.\\
	2) If $u-v-w$ is not a path with $\mathcal{M}(w)\neq\mathcal{M}(v)$, then $v$ in $\Gamma$ is a starting or terminal vertex for more than two arrows. This contradicts with that $v$ is gentle.
\end{proof}
\begin{lem}\label{tri mod}
	Let $(Q,\mathcal{M})$ be a connected modulated quiver which is one-cycle and $\xc\Gamma/J$ be the complexified quiver presentation of $T(Q,\mathcal{M})/I$. If each connected component of $\Gamma$ is one-cycle and vertices outside the cycles of $\Gamma$ are gentle in $(\Gamma,J)$, then $\mathcal{M}$ is v-uniform on $Q$.
\end{lem}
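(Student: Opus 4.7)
The plan is to prove the two assertions of the lemma separately: v-uniformity on $Q^{\circ}$ by a cycle-rank count, and v-uniformity on all of $Q$ by propagating outward from $Q^{\circ}$ via Lemma~\ref{motri}. For the first, set $\Gamma^{\circ}:=\pi^{-1}(Q^{\circ})$. Because $Q\setminus Q^{\circ}$ is a disjoint union of trees each attached at a single vertex to $Q^{\circ}$, its preimage in $\Gamma$ is a forest attached to $\Gamma^{\circ}$, so $\Gamma$ and $\Gamma^{\circ}$ have the same number of connected components $c$ and the same cycle rank $b_{1}$. The hypothesis that each component of $\Gamma$ is one-cycle says exactly $b_{1}=c$, and the same equality therefore holds for $\Gamma^{\circ}$, forcing $|E(\Gamma^{\circ})|=|V(\Gamma^{\circ})|$. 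A direct count from the construction of $\Gamma$ in Section~3.2 gives $|V(\Gamma^{\circ})|=n+k_{c}$ and $|E(\Gamma^{\circ})|=n+e_{2}$, where $n=|Q^{\circ}_{0}|$, $k_{c}$ is the number of $\xc$-vertices on $Q^{\circ}$, and $e_{2}$ is the number of arrows on $Q^{\circ}$ with two fibers in $\Gamma$. Hence $e_{2}=k_{c}$. Combined with the handshake identity $2k_{c}=2e_{cc}+e_{cn}$ (counting $\xc$-$\xc$ and $\xc$-non-$\xc$ arrows) and the fact that the two-fibered arrows are exactly the $\xc$-$\xc$, $\xc$-non-$\xc$, and $\xr$-$\xh$ or $\xh$-$\xr$ arrows, the equation rearranges to $e_{cn}/2+e_{\xr\xh}=0$. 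Both summands must vanish, and connectedness of $Q^{\circ}$ then forces $\mathcal{M}|_{Q^{\circ}}$ to be v-uniform with some $D\in\{\xr,\xh,\xc\}$.

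For the second assertion, write $D$ for the common value of $\mathcal{M}$ on $Q^{\circ}$. For each $w\in Q_{0}$, fix a chain $u_{0}-u_{1}-\cdots-u_{k}=w$ with $u_{0}\in Q^{\circ}$ and $u_{i}\notin Q^{\circ}$ for $i\geq 1$. The fibers of each $u_{i}$ with $i\geq 1$ lie outside the cycle of $\Gamma$ and are gentle in $(\Gamma,J)$ by hypothesis. Thus, once the base case $\mathcal{M}(u_{1})=D$ is settled, an induction on $i$ applying Lemma~\ref{motri} to the subchains $u_{i-2}-u_{i-1}-u_{i}$ (whose middle vertex lies off $Q^{\circ}$ for $i\geq 2$) propagates $\mathcal{M}(u_{i})=D$ up to $i=k$.

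The main obstacle will be this base case. The middle vertex $u_{0}$ of the relevant subchain sits on the cycle of $\Gamma$ and its gentleness is not assumed, so Lemma~\ref{motri} cannot be invoked. I propose to handle it by repeating the cycle-rank-increment calculation of the first part: attaching the fibers of $u_{1}$ together with the lifts of the arrow between $u_{0}$ and $u_{1}$ to $\Gamma^{\circ}$ changes $b_{1}-c$ by exactly $\Delta E-\Delta V$. When $\{D,\mathcal{M}(u_{1})\}=\{\xr,\xh\}$ the lift is a pair of parallel arrows (rule 4 of Section~3.2), producing $\Delta V=1$, $\Delta E=2$, hence $\Delta(b_{1}-c)=1$; when $D=\xc$ with $\mathcal{M}(u_{1})\neq\xc$ the lift bridges the two preimages of $u_{0}$ through the unique fiber of $u_{1}$, and separate counts in the even- and odd-$\overline{\xc}$ subcases both yield $\Delta(b_{1}-c)=1$. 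In either situation some component of $\Gamma$ ends up with cycle rank larger than one, contradicting the one-cycle hypothesis. The remaining subcase, $D\in\{\xr,\xh\}$ with $\mathcal{M}(u_{1})=\xc$, produces the balanced increment $\Delta(b_{1}-c)=0$ since the two lifts run from the single fiber of $u_{0}$ to the two distinct preimages of $u_{1}$; the contradiction here has to be extracted by continuing the cycle count to further neighbours of $u_{1}$ and combining it with the explicit gentleness of both fibers of $u_{1}$ in $(\Gamma,J)$, which is the most delicate part of the argument.
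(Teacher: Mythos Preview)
Your first assertion hinges on the unproved claim that $\pi^{-1}(Q\setminus Q^{\circ})$ is a forest attached to $\Gamma^{\circ}$. This fails in general---a tree-edge between an $\xr$-vertex and an $\xh$-vertex lifts to two parallel arrows (rule~4 of Section~3.2), hence to a cycle. It does hold under the one-cycle hypothesis on $\Gamma$, but only after one shows that every component of $\Gamma^{\circ}$ already contains a cycle (for instance because every vertex of $\Gamma^{\circ}$ has degree at least~$2$), so that any extra cycle produced in the tree-preimage would force two cycles in a single component of $\Gamma$. Without inserting that step, the passage from $b_{1}(\Gamma)=c(\Gamma)$ to $|E(\Gamma^{\circ})|=|V(\Gamma^{\circ})|$ is unsupported. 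The paper avoids the count altogether: it lifts the closed chain $Q^{\circ}$ directly via Lemma~\ref{fiber of chain} and observes that a mixed edge on $Q^{\circ}$ with one endpoint in $\{\xr,\xh\}$ forces two distinct lifted cycles through that endpoint's unique fiber, giving two cycles in one component.

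For the second assertion, your cycle-rank increments dispose of $\{D,\mathcal{M}(u_{1})\}=\{\xr,\xh\}$ and of $D=\xc$, the latter matching the paper's explicit chain argument. The remaining subcase $D\in\{\xr,\xh\}$ with $\mathcal{M}(u_{1})=\xc$ is a genuine obstruction, not merely ``delicate''. Take $Q^{\circ}$ a length-$2$ cycle on $\xr$-vertices with a single pendant $\xc$-vertex $u_{1}$: then $\Gamma$ is connected and one-cycle, the two fibers $u_{1},\overline{u_{1}}$ are leaves and hence vacuously gentle in $(\Gamma,J)$ for any $J$, yet $\mathcal{M}$ is not v-uniform. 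There are no further neighbours to extend your count to, and gentleness of degree-$1$ vertices carries no information. The paper's one-line appeal to Lemma~\ref{motri} in this case likewise requires the fiber of $u_{0}\in Q^{\circ}$ to be gentle, which the stated hypothesis does not provide; in the actual application (Section~6.3) it is the stronger input of Remark~\ref{gentle point}---all vertices of $\Gamma$ gentle except one on the cycle---that closes this gap.
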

\begin{proof}
	Let $c$ be a chain in $Q$ forming $Q^\circ$. If there is a chain $u-v$ in $c$ with $\mathcal{M}(u)\neq\mathcal{M}(v)$, then we can assume that $\mathcal{M}(u)=\xr$ or $\xh$. By Lemma \ref{fiber of chain}, each fiber of $c$ is a cycle containing $u$ in $\Gamma$. Since $u-v$ has exactly two fibers which are in different cycles in $\Gamma$, we get two cycles in one connected component. Hence $\mathcal{M}$ is v-uniform on $Q^\circ$.
	
	If $\mathcal{M}(i)=\xr$ (or $\xh$) for any $i\in Q^{\circ}_0$, then by Lemma \ref{motri} $\mathcal{M}$ is v-uniform on $Q$.
	
	If $\mathcal{M}(i)=\xc$ for $i\in Q^{\circ}_0$, we prove that $\mathcal{M}$ is v-uniform. If not, there is a chain $d\colon v_0-v_1-\cdots-v_t$ in $Q$ such that $v_0,\dots,v_{t-1}\notin Q^{\circ}_0$, $v_t\in Q^{\circ}_0$, $\mathcal{M}(v_1)=\mathcal{M}(v_2)=\cdots=\mathcal{M}(v_t)=\xc$ and $\mathcal{M}(v_0)\neq\xc$. There are exactly two fibers of $d$ in $\Gamma$: one is from $v_0$ to $v_t$ and the other one is from $v_0$ to $\overline{v_t}$. When the number of arrows in $Q^{\circ}$ with modulation $\overline{\xc}$ is even, by Lemma \ref{fiber of chain}, the fibers of $c$ are two cycles in $\Gamma$. These two cycles are connected with fibers of $d$ and thus are in one connected component of $\Gamma$, which contradicts with $\Gamma$ is one cycle. When the number of arrows in $Q^{\circ}$ with modulation $\overline{\xc}$ is odd, by Lemma \ref{fiber of chain} each fiber of $c$, together with fibers of $d$, form a cycle in $\Gamma$. Then there are two cycles in one connected component of $\Gamma$, which is also a contradiction.
\end{proof}

\subsection{Proof of the `if' part of Theorem \ref{main}}
\begin{proof}
	Assume that $Q$ is connected and each connected component of $\xc\Gamma/J$ is gentle one-cycle without clock condition. Hence $\xc\Gamma/J\simeq\xc\Gamma/J'$ for some $J'=\langle p_1,\dots,p_r\rangle$ such that each connected component of $(\Gamma,\{p_1,\dots,p_r\})$ is gentle one-cycle without clock condition. As in Section 6.1, there is an automorphism $\sigma$ on $\mathrm{Aut}(\xc\Gamma)$. Then each connected component of $(\Gamma,\sigma(\{p_1,\dots,p_r\}))$ is also gentle one-cycle without clock condition and $\sigma$ induces an isomorphism of algebras $\xc \Gamma/J'\simeq\xc\Gamma/\sigma(J')$. Replace $\xc\Gamma/J'$ by $\xc\Gamma/\sigma(J')$ and $\xc \Gamma/J\simeq\xc \Gamma/J'$ by the composition $\xc \Gamma/J\simeq\xc \Gamma/J'\simeq\xc\Gamma/\sigma(J')$. Then this new isomorphism induces uniquely an automorphism on $\Gamma_0$ which is the identity map. This allows us to apply Proposition~\ref{special} on each connected component of $\xc\Gamma/J$. By Lemma \ref{tr-or-cy} $Q$ is a tree or one-cycle. 
	
	1) Suppose that $J$ is not monomial. By Proposition \ref{special}, there is a connected component of $\Gamma$ containing $\Omega$ as a subquiver. So $Q$ cannot be a tree otherwise the cycle in this component is not oriented. Therefore $Q$ is one-cycle. By Remark \ref{gentle point} and Lemma \ref{tri mod}, $\mathcal{M}$ is v-uniform. 
	
	If $\mathcal{M}$ is v-uniform with $\xc$ and only one arrow in $Q^{\circ}$ with modulation $\overline{\xc}$, then the cycle in  $\Gamma$ ($\Gamma$ is connected in this case) is formed by two fibers of $Q^{\circ}$. It cannot contain $\Omega$. In fact, $\Omega$ has only one relation $\alpha_{n-1}\alpha_0$ on the cycle, but by Lemma~\ref{rtc} relations on the cycle appear in pairs. Hence if $\mathcal{M}$ is v-uniform with $\xc$, then by Lemma \ref{oneornon} we can assume that $\mathcal{M}(\alpha)=\xc$ for each $\alpha\in Q_1$. Therefore we can assume that $T(Q,\mathcal{M})$ is a path algebra by Lemma \ref{pathalg}.
	
	If $T(Q,\mathcal{M})=\xh Q$, by Proposition \ref{special} $J=\langle\gamma\beta-\lambda\gamma\alpha_{n-1}\cdots\alpha_0\beta,p_2,\dots,p_r\rangle$, where $p_i,\alpha_i,\gamma,\beta$ are paths in $\Gamma$ (or in $Q$), $\lambda\in\xc^*$. By Lemma \ref{rtc} and isomorphisms in Lemma \ref{pathalg}, we have $p_i\in I$, $i=2,\dots,r$, and $\gamma\beta,\gamma\alpha_{n-1}\cdots\alpha_0\beta\notin I$. So $\dim_{\xh}\mathrm{Hom}_{\xh Q/I}(P_w,P_u)=1$ and there is a nonzero $\mu$ in $\xr$ (see Example \ref{aft-cpl-npth}) such that $\gamma\beta-\mu\gamma\alpha_{n-1}\cdots\alpha_0\beta\in I$.  Thus $I=\langle\gamma\beta-\mu\gamma\alpha_{n-1}\cdots\alpha_0\beta,p_2,\dots,p_r\rangle$. Let $I'=\langle\gamma\beta,p_2,\dots,p_r\rangle$. Then $\xh Q/I'$ is gentle one-cycle without clock condition. The map $\theta$ given below:
	$$\forall i\in Q_0, \theta(e_i)=e_i\mbox{; }\forall\alpha\in Q_1, \theta(\alpha)=\begin{cases}
	\alpha & \text{if } \alpha\neq\gamma\\
	\gamma-\mu\gamma\alpha_{n-1}\cdots\alpha_0 & \text{if } \alpha=\gamma
	\end{cases},$$ induces an isomorphism of $\xr$-algebras $\xh Q/I'\rightarrow \xh Q/I$. Consequently, $T(Q,\mathcal{M})/I$ is gentle one-cycle without clock condition. 
	
	The cases of $\xr$ and $\xc$ can be proved by using a similar argument.
	
	2) Suppose that $J$ is monomial. We claim that $Q$ is not a tree. If not, by Lemma~\ref{tr-or-cy} $Q$ contains either a chain $u-v$ with $\mathcal{M}(u)=\xr$ and $\mathcal{M}(v)=\xh$ or a chain $v_1-v_2-\cdots-v_n$, $n>2$ such that $\mathcal{M}(v_1)\neq\xc$, $\mathcal{M}(v_i)=\xc$ for $i\in\{2,3\dots,n-1\}$, and $\mathcal{M}(v_n)\neq\xc$. In each case, the cycle in $\Gamma$ ($\Gamma$ is connected in these cases) is formed by the two fibers of this chain. For the first case, the cycle in $\Gamma$ has no relations on it. For the second one, by Lemma \ref{sym} the relations in the cycle appear in pairs. Thus both cases cannot appear since $\xc\Gamma/J$ is without clock condition by Proposition \ref{special}. 
	
	Finally, suppose that $J$ is monomial and $Q$ is one-cycle. Then $\mathcal{M}$ is v-uniform by Lemma \ref{tri mod}. Therefore $T(Q,\mathcal{M})/I$ is gentle one-cycle without clock condition by Lemma \ref{rtc} and Lemma \ref{vuniformtogentle}.	
\end{proof}

\noindent {\bf Acknowledgements.}\quad The author thanks the referee for the very helpful remarks. He is grateful to Xiao-Wu Chen for his guidance and encouragement during his study and visit in USTC. He also thanks Fei Xu and Chao Zhang for their advices.

This work is supported by the National Natural Science Foundation of China (No.12171297).

\noindent {\bf Data availability.} Data sharing is not applicable to this article since no new data is created or analyzed in this study.

\vskip 10pt

{\footnotesize \noindent Jie Li\\
	School of Mathematics, Hefei University of Technology, Hefei 230000, Anhui, PR China}

\end{document}